\documentclass[review]{elsarticle}
\usepackage{amsmath}
\usepackage{amsfonts}
\usepackage{amsfonts}
\usepackage{amssymb,latexsym}
\usepackage{amsthm}
\usepackage{lineno,hyperref}

\newtheorem{theorem}{\bf Theorem}[section]

\newtheorem{remark}{\bf Remark}[section]
\newtheorem{proposition}{\bf Proposition}[section]
\newtheorem{lemma}{\bf Lemma}[section]

\usepackage{graphicx}
\usepackage{chngcntr}
\counterwithin{figure}{section}
\counterwithin{equation}{section}









\bibliographystyle{elsarticle-num}

\begin{document}

\begin{frontmatter}

\title{On symmetry and recovery of steady continuous stratified periodic water waves\tnoteref{mytitlenote}}
\tnotetext[mytitlenote]{This work was supported in part by NSFC(No.11571057).}

\author{Fei Xu}
\author{Yong Zhang}
\author{Fengquan Li\fnref{*}}

\fntext[*]{E-mail address: fqli@dlut.edu.cn}

\address{Dalian University of Technology, Dalian 116024,
People's Republic of China}

\begin{abstract}
This paper considers two-dimensional steady continuous stratified periodic water waves. Firstly, we prove that each streamline must be symmetric about the crest line when it is strictly monotonous between successive troughs and crests by exploiting the maximum principle and analysis of surface profile. Then, standard Schauder estimates are exploited on the uniform oblique derivative problems to show that all streamlines are real analytic (including the free surface). Based on above symmetry and regularity of streamlines, finally we provide an analytic expansion method to recover the water waves from horizontal velocity on the axis of symmetry and wave height. Most notably, all of results here are suitable not only for small amplitude but also for large amplitude.
\end{abstract}

\begin{keyword}
continuous stratified, symmetry, regularity, recovery
\MSC[2010] 35Q35, 35J25, 35J60
\end{keyword}

\end{frontmatter}

\section{\bf Introduction}
The density stratification of ocean dynamics is common due to the change of water temperature and salinity at different depth. Moreover, a heterogeneous density distribution can be roughly divided into two cases. One is called discontinuous stratification, that is to say, there is a thin transition layer called pycnocline between two liquids or one liquid with different density. Thus, as it passes through the pycnocline, the density experiences something close to a jump discontinuity. The other is called continuous stratification, even analytic stratification(see \cite{15}), meaning that the change of density is continuous. Stratification of water waves, a very applied problem, has attracted a great deal of scholarly interests, especially in the oceanography and geophysical fluid dynamics communities.

The existence theories for two-dimensional stratified steady periodic gravity waves with or without surface tension were investigated in \cite{21,22} or in \cite{1}. \cite{14} provided the existence and qualitative theory for stratified solitary water waves. In \cite{13}, the author developed some important results about continuous dependence on the density for stratified steady water waves. At the same time, the symmetry of the monotonous stratified wave of small amplitude has been dealt with in \cite{3}. In fact, we note that the symmetric waves of large amplitude are known to exist in \cite{1}. Thus, it is reasonable to expect that the continuous stratified waves with large amplitude are necessarily symmetric. Another kind of important question, with more practical applications, is about the recovery or determination of the surface waves from some given data. For example, \cite{16,17,18} are about the recovery of steady periodic wave profiles from pressure measurements at the bed. \cite{19} and \cite{20} provided some new methods to determine solitary water wave profiles from pressure transfer functions. \cite{23} derived a power series reconstruction formula from horizontal velocity on vertical symmetry axis for Stokes waves. However, as far as we know, there is less research for the determination of stratified steady water waves than steady water waves. Recently, in \cite{15}, R.M. Chen and S. Walsh studied that the pressure data uniquely determines the solitary stratified wave, both in the real analytic and Sobolev regimes. In fact, the pressure measurements at the flat bed is not easy or precise due to the complex condition in sea and defects of our measuring tools. Thus, we may reasonably ask: Can we recover more information about stratified steady waves from less given data?

Despite some significant investigations on stratified water waves have been carried out in the last decade, other fundamental questions are still remaining to be resolved, just like mentioned above. Therefore, an important aim of this paper is to prove that the strong symmetry result which exist for gravity water waves in \cite{5,6} can be extended to the continuous stratified water waves. Our results are suitable for not only small amplitude but also large amplitude, which is a significant improvement to \cite{3}. The other is on the recovery of wave profile, the velocity field and the pressure distribution in continuous stratified periodic water waves. The proof of our result is mainly based on the real analyticity of each streamline, originating from \cite{8,9,11,12}.

This paper is organized as follows. In Section 2 we will show the equivalent formulations of the government equations for steady stratified water waves. The fact that every monotonous streamline is necessarily symmetric without the restriction of the elevation will be proved in Section 3. At last, the standard Schauder estimates (following\cite{9}) are applied to prove the analyticity of each streamline, then, we provide an analytic expansion method to recover the stratified wave from horizontal velocity on the axis of symmetry and wave height.

\section{\bf Equivalent formulations of stratified water waves}

\subsection{Governing equation in velocity formulation}
Let us now briefly review the setup for 2-D steady continuous stratified periodic water waves in \cite{1}. Fix a Cartesian coordinate system such that the $X$-axis points to be horizontal, and the $Y$-axis to be vertical. We assume that the floor of the sea bed is flat and occurs at $Y=-d$, and $Y=\eta(t,X)$ be the free surface. We shall normalize $\eta$ by choosing the axis so that the free surface is oscillating around the line $Y=0$. As usual we let $u=u(t,X,Y)$ and  $v=v(t,X,Y)$ denote the horizontal and vertical velocities respectively, and let $\rho=\rho(t,X,Y)> 0$ be the density and $P=P(t,X,Y)$ be the pressure, all of which have the form $(X-ct)$ due to considering steady travelling wave in this paper where $c$ represents the speed of wave. For convenience, we denote $x=X-ct, y=Y$ in following states and consider problem in $\Omega=\{(x,y)|-\pi<x<\pi, -d<y<\eta(x)\}$ due to the periodicity.

For water waves, it is appropriate to suppose that the flow is incompressible. Mathematically, this assumption manifests itself as the requirement that the vector field be divergence free for all time
\begin{eqnarray}
u_{X}+v_{Y}=0. \label{eq2.1}
\end{eqnarray}
Taking the fluid to be inviscid, conservation of mass implies
\begin{eqnarray}
\rho_{t}+(\rho u)_{X}+(\rho v)_{Y}=0. \label{eq2.2}
\end{eqnarray}
However, the relation of time-space $(X-ct)$ and (\ref{eq2.1})(\ref{eq2.2}) demonstrate
\begin{eqnarray}
(u-c)\rho_{x}+v\rho_{y}=0. \label{eq2.3}
\end{eqnarray}
Therefore, the governing equations in velocity field formulation are expressed by the nonlinear free-boundary problem (see\cite{1})
\begin{eqnarray}
\left\{\begin{array}{llll}{u_{x}+v_{y}=0} & {\text { for }-d \leq y \leq \eta_(x)} \\
{(u-c)\rho_{x}+v\rho_{y}=0} & {\text { for }-d \leq y \leq \eta_(x)} \\
{\rho(u-c) u_{x}+ \rho v u_{y}=-P_{x}} & {\text { for }-d \leq y\leq \eta(x)} \\
{\rho(u-c) v_{x}+\rho v v_{y}=-P_{y}-g\rho} & { \text { for }-d \leq y \leq \eta(x)} \\
{v=0} & {\text { for } y=-d} \\
{v=(u-c) \eta_{x}} & {\text { on } y=\eta(x)} \\
{P=P_{atm}} & {\text { on } y=\eta(x)}\end{array}\right. \label{eq2.4}
\end{eqnarray}
where $P_{atm}$ is the constant atmosphere pressure, and $g=9.8 m/s^{2}$ is the (constant) gravitational acceleration at the Earth's surface. The solutions we consider are periodic in the variable $x$, namely $(u,v,P,\eta)$ are all $2\pi$-periodic in $x$, and the stagnation points are excluded from the flow. The latter property is satisfied if we assume that $u < c$ throughout the fluid.

Recall that we have chosen our axis so that $\eta$ oscillates around the line $y=0$. In other words,
We shall also denote
\begin{eqnarray}
\int^{\pi}_{-\pi}\eta(x)dx=0. \label{eq2.5}
\end{eqnarray}
At the same time, we let
\begin{eqnarray}
\eta_{max}:=\mathop{max}\limits_{[-\pi,\pi]}\eta(x)+d,~~~~\eta_{min}:= \mathop{min}\limits_{[-\pi,\pi]}\eta(x)+d. \label{eq2.6}
\end{eqnarray}
These are the maximum and minimum distances between the surface and the bed respectively. In fact, we will see that $\eta_{max}=\eta(0)+d$ and $\eta_{min}=\eta(\pm\pi)+d$, where $\eta(0)$ is defined as wave height later.

\subsection{Governing equation in stream function formulation}
Observe that, by conservation of mass and incompressibility, $\rho$ is transported and the vector field is divengence free. Therefore we may introduce a (relative) pseudo-stream function $\psi=\psi(x,y)$ satisfying
\begin{eqnarray}
\psi_{x}=-\sqrt {\rho}v, ~~~~~~~\psi_{y}=\sqrt {\rho}(u-c)<0. \label{eq2.7}
\end{eqnarray}
That is to say, here we add a $\rho$ factor to the typical definition of the stream function for an incompressible fluid (see \cite{7}).

It is a straightforward calculation to check that $\psi$ is indeed a (relative)stream function in the usual sense, i.e. its gradient is orthogonal to the vector field in the moving frame at each point in the fluid domain. As usual, we shall refer to the level sets of $\psi$ as the streamlines of the flow. For definition we choose $\psi\equiv 0$ on the free boundary $y=\eta(x)$, so that $\psi\equiv -p_{0}>0$ on $y=-d$ due to our assumption $u<c$, where
\begin{eqnarray}
p_{0}= \int_{-d}^{\eta(x)}\sqrt {\rho(x,y)}[u(x,y)-c]dy, \label{eq2.8}
\end{eqnarray}
is called pseudo mass flux. Since $\rho$ is transported, it must be constant on the streamlines. We may therefore let streamline density function $\rho\in C^{1,\alpha}([p_{0}, 0]; R^{+})$ be given such that
\begin{eqnarray}
\rho(x,y)=\rho(-\psi(x,y)). \label{eq2.9}
\end{eqnarray}
throughout the fluid. Moreover, the streamline density function $\rho$ is nonincreasing, meaning that $\rho'\leq0$.

From Bernoulli's law, we know that
\begin{eqnarray}
E=\frac{\rho}{2}((u-c)^{2}+v^{2})+gy\rho+P \label{eq2.10}
\end{eqnarray}
is a constant along each streamline. Then, under the assumption that $u < c$ throughout the fluid, there exists a function $\beta \in C^{1,\alpha}([0, |p_{0}|]; R)$ such that
\begin{eqnarray}
\frac{dE}{d\psi}=-\beta(\psi),  \label{eq2.11}
\end{eqnarray}
where $\beta$ is called the Bernoulli function corresponding to the flow (see \cite{1}). Physically it describes the variation of specific energy as a function of the streamlines. It is worth noting that when $\rho$ is a constant, $\beta$ reduces to the vorticity function. Thus, (\ref{eq2.10}) and (\ref{eq2.11}) show that
\begin{eqnarray}
\frac{dE}{d\psi}=\Delta\psi-gy\rho'(-\psi)=-\beta(\psi(x,y)). \label{eq2.12}
\end{eqnarray}
Moreover, evaluating Bernoulli's theorem on the free surface $\psi \equiv 0$, we find
\begin{eqnarray}
2E|_{\eta}=2P_{atm}+|\nabla\psi|^{2}+2g\eta\rho(-\psi)~~~~~~on ~~y=\eta(x). \label{eq2.13}
\end{eqnarray}

Summarizing the above considerations, we can reformulate the governing equations as the free boundary problem:
\begin{eqnarray}
\left\{\begin{array}{ll}{ \Delta\psi-gy\rho'(-\psi)=-\beta(\psi)} & {\text { in }-d < y<\eta(x)} \\
{|\nabla\psi|^{2}+2g\rho(-\psi)(y+d)=Q} & { \text { on } y=\eta(x)} \\
{\psi=0} & {\text { on } y=\eta(x)} \\
{\psi=-p_{0}} & {\text { on } y=-d}\end{array}\right. \label{eq2.14}
\end{eqnarray}
where $Q=2(E|_{\eta}-P_{atm}+g\rho |_{\eta}d)$.

\subsection{Governing equation in height function formulation}
Now we consider the alternative system $(q,p)$, where
\begin{eqnarray}
q=x,~~~~~p=-\psi(x,y), \label{eq2.15}
\end{eqnarray}
originating from Dubreil-Jacotin's transformation in \cite{24}, which transforms the fluid domain
$$\Omega=\{(x,y):~x\in(-\pi,\pi),~-d<y<\eta(x)\}$$
into rectangular domain
$$D=\{(q,p):~-\pi< q < \pi,~ p_{0}< p< 0\},$$
with $$\overline{D}=\{(q,p):~-\pi\leq q \leq \pi,~ p_{0}\leq p\leq 0\}.$$
Given this, we shall denote
$$ T:=\{(q,p)\in D : p=0\}, ~B=\{(q,p)\in D :p=p_{0}\}.$$
Note that, in light of (\ref{eq2.9}) and (\ref{eq2.11}), we have that
\begin{eqnarray}
\beta=\beta(-p), \rho=\rho(p). \label{eq2.16}
\end{eqnarray}
Next we define
\begin{eqnarray}
h(q,p):=y+d, \label{eq2.17}
\end{eqnarray}
which gives the height above the flat bottom. Some simple calculations imply
\begin{eqnarray}
\psi_{y}=-\frac{1}{h_{p}},~~~~ \psi_{x}=\frac{h_{q}}{h_{p}}, \label{eq2.18}
\end{eqnarray}
\begin{eqnarray}
\partial_{q}=\partial_{x}+h_{q}\partial_{y},~~~~ \partial_{p}=h_{p}\partial_{y}. \label{eq2.19}
\end{eqnarray}
We have normalized $\eta$ so that it has mean zero. Taking the mean of (\ref{eq2.17}) along $T$, We obtain
\begin{eqnarray}
d=\int_{-\pi}^{\pi}h(q,0)dq, \label{eq2.20}
\end{eqnarray}
which is the average value of $h$ over $T$. We also have
\begin{eqnarray}
h_{q}=\frac{v}{u-c},~~~~ h_{p}=\frac{1}{\sqrt{\rho}(c-u)}>0, \label{eq2.21}
\end{eqnarray}
\begin{eqnarray}
u=c-\frac{1}{\sqrt{\rho}h_{p}},~~~~ v=-\frac{h_{q}}{\sqrt{\rho}h_{p}}, \label{eq2.22}
\end{eqnarray}
Consequently, we can rewrite the governing equations as height function formulation:
\begin{eqnarray}
\left\{\begin{array}{lll}{\left(1+h_{q}^{2}\right) h_{p p}-2 h_{q} h_{p} h_{q p}+h_{p}^{2} h_{q q}+[\beta-g(h-d)\rho']h_{p}^{3}=0}  & {\text { in } p_{0}<p<0} \\
{1+h_{q}^{2}+h_{p}^{2}(2g\rho h-Q)=0} & {\text { on } p=0} \\
{h=0} & {\text { on } p=p_{0}}\end{array}\right. \label{eq2.23}
\end{eqnarray}
Now let us define the nonlinear differential operators
\begin{eqnarray}
A(h)\varphi=(1+h_{q}^{2})\varphi_{pp}-2h_{q}h_{p}\varphi_{qp}+h_{p}^{2}\varphi_{qq}. \label{eq2.24}
\end{eqnarray}
\begin{eqnarray}
B(h)\varphi=h_{q}\varphi_{q}+(2gph-Q)h_{p}\varphi_{p}+g\rho h_{p}^{2}\varphi. \label{eq2.25}
\end{eqnarray}
In fact, the operator $A(h)$ is uniformly elliptic due to
$$
\Delta=(h_{q}h_{p})^{2}-(1+h_{q}^{2})h_{p}^{2}=-h_{p}^{2}\leq-min_{\overline{D}}h_{p}^{2}<-m<0,
$$
and the operator $B(h)$ is uniformly oblique due to
$$
(2g\rho h-Q)h_{p}=-\frac{1+h_{q}^{2}}{h_{p}}\leq-\frac{1}{max_{\overline{D}}h_{p}}\leq-M<0.
$$
The periodicity would ensure that the boundary conditions on the sides $q=\pm\pi$ of $D$ are
\begin{eqnarray}
\left\{\begin{array}{ll}{h(\pm\pi,0)<h(q,0)}  & {\text { for } -\pi<q<\pi} \\
{h(\pm\pi,p)\leq h(q,p)} & {\text { for } -\pi<q<\pi, p_{0}\leq p<0}\end{array}\right. \label{eq2.26}
\end{eqnarray}
which express, respectively, that $q=\pm\pi$ is the single wave trough and that every streamline attains a minimum below the trough. In addition, every streamline is required to be monotone near the trough, i.e.
\begin{eqnarray}
h(q_{1},p)\leq h(q_{2},p)~~~~for -\pi<q_{1}<q_{2}<-\pi+\varepsilon,~p_{0}\leq p\leq 0,  \label{eq2.27}
\end{eqnarray}
where $\varepsilon>0$ is small.

\section{\bf Symmetry of each streamline in stratified water waves}
In this section, we are going to prove the following Theorem \ref{theorem3.1} whose proof is based on elliptic maximum principles and the moving plane method, cf.\cite{2,4}. The moving plane method mainly consists in its setup and moving the line until an extremal position is reached. In fact, (\ref{eq2.27}) can ensure the setup of moving plane method for our problem. Then, using sharp elliptic maximum principles, (cf. Lemma \ref{lemma3.1} and Lemma \ref{lemma3.2}), we show that the limiting line is the crest line $x=0$ and that each streamline is symmetric with respect to it.

\begin{theorem}\label{theorem3.1}
Let $h\in C^{2,\alpha}(\overline{D})$ be the solution to (\ref{eq2.23}) when the streamline density function $\rho \in C^{1,\alpha}([p_{0},0];R^{+})$ and Bernoulli function $\beta \in C^{0,\alpha}([0,|p_{0}|];R)$ are given. Assume each streamline is monotone between successive crest line and through line with period $2\pi$, then it is symmetric about the crest line $x=0$.
\end{theorem}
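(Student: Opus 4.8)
The plan is to implement the moving plane method in the rectangular domain $\overline{D}$, working with the height function $h$ rather than the free surface directly. For $\lambda\in(-\pi,0]$ define the reflected function $h^\lambda(q,p):=h(2\lambda-q,p)$ on the sub-rectangle $D_\lambda:=\{(q,p): \lambda<q<\pi,\ p_0<p<0\}$ (periodicity lets us interpret the reflection sensibly near the trough lines $q=\pm\pi$), and set $w^\lambda:=h^\lambda-h$. The first step is to show that $w^\lambda$ satisfies a linear \emph{uniformly elliptic} equation with no zeroth-order term: subtracting the interior equation of \eqref{eq2.23} for $h^\lambda$ and $h$, the quasilinear structure in \eqref{eq2.24}–\eqref{eq2.25} yields, after writing differences of the coefficients as integrals of their derivatives along the segment joining $\nabla h^\lambda$ and $\nabla h$, an equation $\mathcal{L}^\lambda w^\lambda=0$ in $D_\lambda$ whose principal part is a perturbation of $A(h)$ (hence uniformly elliptic, using $h_p\geq \min_{\overline D}h_p>0$) and whose first-order coefficients are bounded in $C^{0,\alpha}$; crucially the term $[\beta-g(h-d)\rho']h_p^3$ contributes a first-order (in $w^\lambda$) term, not a zeroth-order one, because $\beta=\beta(-p)$ and $\rho'=\rho'(p)$ depend only on $p$, which is unchanged under the reflection. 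This is exactly the feature that makes the strong maximum principle and Hopf lemma (the ``sharp elliptic maximum principles'' referenced as Lemma \ref{lemma3.1} and Lemma \ref{lemma3.2}) directly applicable.

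The second step is to analyze the boundary $\partial D_\lambda$. On the bottom $p=p_0$ both $h$ and $h^\lambda$ vanish, so $w^\lambda=0$ there. On the right wall $q=\pi$, reflection gives $h^\lambda(\pi,p)=h(2\lambda-\pi,p)$, and since $2\lambda-\pi\in(-\pi,\pi)$ lies strictly between the trough lines, \eqref{eq2.26} gives $h(2\lambda-\pi,p)\geq h(\pi,p)$, i.e. $w^\lambda\geq 0$ on $\{q=\pi\}$. On the curved ``top'' $p=0$ one uses the surface Bernoulli condition: the second line of \eqref{eq2.23} shows $1+h_q^2$ is determined by $h$ and $p$ on $p=0$, and subtracting the reflected version produces an oblique-derivative boundary relation for $w^\lambda$ on $T$ — this is where the uniform obliqueness of $B(h)$ noted after \eqref{eq2.25} enters. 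Finally, on the moving line $q=\lambda$ one has $w^\lambda=0$ by construction. Starting from $\lambda$ slightly larger than $-\pi$, the monotonicity hypothesis \eqref{eq2.27} gives $w^\lambda\geq 0$ on all of $D_\lambda$ for such $\lambda$; then one shows by a continuity/contradiction argument that the set of $\lambda\in(-\pi,0]$ for which $w^\lambda\geq 0$ in $D_\lambda$ is open and closed, hence $w^0\geq 0$ on $D_0$. Running the same argument from the other side (reflecting about $\lambda$ near $+\pi$, or equivalently using the symmetry $q\mapsto -q$) gives $w^0\leq 0$, whence $h(-q,p)=h(q,p)$, i.e. symmetry about $x=0$.

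The heart of the continuity argument, and the step I expect to be the main obstacle, is ruling out the two ways the reflected inequality can fail to propagate as $\lambda$ increases. If $w^{\lambda_0}\geq 0$ but $w^{\lambda_0}\not\equiv 0$, the strong maximum principle forces $w^{\lambda_0}>0$ in the interior, and the Hopf lemma forces $\partial_q w^{\lambda_0}<0$ on the line $q=\lambda_0$; by continuity $w^{\lambda}>0$ persists for $\lambda$ slightly larger, except possibly where the moving hyperplane touches the boundary. The delicate points are precisely these boundary contacts: the corner points where $q=\lambda$ meets $p=p_0$ and where it meets the free surface $p=0$. At the bottom corner one needs a Serrin-type corner lemma (an edge-point version of Hopf's lemma) to conclude $w^\lambda$ cannot have a degenerate zero there; at the surface corner one must combine the interior strong maximum principle with the oblique boundary condition on $T$ and again a corner argument, which is the technically heaviest part because the obliqueness vector and the reflected geometry interact. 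I would handle these exactly as in \cite{2,4,5,6}, adapting the corner lemmas to the oblique boundary operator $B(h)$; the stratification terms cause no new difficulty here precisely because, as noted, $\rho$ and $\beta$ depend only on $p$ and so never generate a bad-signed zeroth-order coefficient. The other routine-but-necessary check is that the limiting line cannot stop strictly before $x=0$: if it did, one of the above boundary contacts would be nondegenerate, contradicting the monotonicity near the trough together with \eqref{eq2.26}.
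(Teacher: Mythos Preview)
Your overall strategy---moving planes in the $(q,p)$ rectangle, subtracting the reflected equation, and analyzing the boundary pieces with Hopf/Serrin corner lemmas---matches the paper's. But there is a concrete error in your linearization: you claim the term $[\beta-g(h-d)\rho']h_p^3$ produces only first-order contributions to $\mathcal{L}^\lambda w^\lambda$ because $\beta$ and $\rho'$ depend only on $p$. That overlooks the explicit factor $(h-d)$. Writing the difference of $-g\rho'(h-d)h_p^3$ for $h$ and $h^\lambda$ yields, in addition to derivative terms, the zeroth-order piece $-g\rho'\,h_p^3\,w^\lambda$. So the linearized operator has $C=-g\rho' h_p^3$, and since $\rho'\le 0$ this coefficient is \emph{nonnegative}, which is precisely the unfavorable sign for the minimum principle you want (one cannot conclude $w^\lambda\ge 0$ in $D_\lambda$ from $w^\lambda\ge 0$ on $\partial D_\lambda$ by the classical weak maximum principle).

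This is not a cosmetic issue: it is exactly why the paper invokes the sign-free version of the strong maximum principle and Hopf lemma valid when $\inf w=0$ (Lemma~\ref{lemma3.1}), and---more importantly for the continuation step---the narrow-region maximum principle (Lemma~\ref{lemma3.3}). Your ``by continuity $w^\lambda>0$ persists'' is the place where the gap bites: once $C$ can be positive, positivity on compact interior sets does not propagate across the thin strip added when $\lambda$ increases past $\lambda_0$, and a narrow-domain argument is genuinely needed. The paper's proof handles the extremal position by splitting into $\lambda_0=0$ (Serrin edge-point contradiction at the trough corner) and $\lambda_0\in(-\pi,0)$ (Hopf on the top boundary gives strict positivity of $w^{\lambda_0}(\cdot,0)$, then the narrow-region lemma pushes $\lambda$ past $\lambda_0$, contradicting maximality unless $w^{\lambda_0}\equiv 0$). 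If you correct the zeroth-order term and insert the narrow-region lemma at the openness step, your outline becomes essentially the paper's argument; the two-sided reflection you propose at the end is then unnecessary.
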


\begin{remark}\label{remark3.1}
The existence of solution $h$ to (\ref{eq2.23}) is strictly proved in \cite{1}.
\end{remark}

Before starting our proof, we firstly introduce some useful Lemmas about regional maximal principle.

\begin{lemma} \label{lemma3.1}
(see Theorem 2.13, Theorem 2.15 and Remark 2.16 in \cite{2}) \\
Let $D\subset R^{2}$ be an open rectangle and $\omega\in C^{2}(\overline{D})$ satisfy $L\omega\leq0$ for some uniformly elliptic operator $L=a_{ij}\partial_{ij}+b_{i}\partial_{i}+c$ with measurable and bounded coefficients in $\overline{D}$, moreover, such that $\inf_{D}\omega=0$. Then the followings hold (without condition $c\leq 0$ in $D$)\\
$(1)$ The weak maximum principle: $\omega$ attains its minimum on $\partial D$. \\
$(2)$ The strong maximum principle: If $\omega$ attains its minimum in $D$, then $\omega$ is constant in $D$.\\
$(3)$ Hopf's maximum principle: Let Q be a point on $\partial D$, different from the corners of the rectangle $\overline{D}$. If $\omega(Q)<\omega(X)$ for all $X$ in $D$, then $\partial_{\nu}\omega(Q)\neq 0$.
\end{lemma}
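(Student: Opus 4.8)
The plan is to deduce all three assertions from the standard maximum-principle theory for uniformly elliptic operators whose zeroth-order coefficient has the good sign; the only genuine difficulty is that $c$ is allowed to change sign, and this is exactly what the hypothesis $\inf_D\omega=0$ is designed to neutralize. Since $\omega\in C^2(\overline D)\subset C(\overline D)$ and $\inf_D\omega=0$, continuity gives $\omega\ge 0$ on $\overline D$ and, by compactness, $\min_{\overline D}\omega=0$. Writing $c=c^+-c^-$ with $c^\pm\ge 0$ and $L_0=a_{ij}\partial_{ij}+b_i\partial_i$, the nonnegativity of $\omega$ lets me absorb the bad term: from $L\omega\le 0$ and $c^+\omega\ge 0$,
\begin{equation*}
\widetilde L\omega:=(L_0-c^-)\omega=L\omega-c^+\omega\le 0,
\end{equation*}
and $\widetilde L$ is uniformly elliptic with the same bounded principal and first-order parts and with zeroth-order coefficient $-c^-\le 0$. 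Thus $\omega$ is a nonnegative supersolution of a good-sign operator with minimum value $0$, and I would run the classical machinery for $\widetilde L$ from here on.

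For $(1)$ I would prove the weak minimum principle for $\widetilde L$ by the usual perturbation. Using uniform ellipticity ($a_{11}\ge\lambda_0>0$) and boundedness of $b_1$, one can pick $\gamma$ so large that $L_0 e^{\gamma x_1}>0$; then $\omega_\varepsilon:=\omega-\varepsilon e^{\gamma x_1}$ satisfies $L_0\omega_\varepsilon<0$ strictly, and a function with a strictly signed elliptic expression cannot attain an interior minimum, so $\omega_\varepsilon$ takes its minimum on $\partial D$. Letting $\varepsilon\to 0$ transfers this to $\omega$; the zeroth-order term $-c^-\le 0$ is incorporated through the fact that the minimum value $0$ is nonpositive, which is precisely the regime in which a supersolution of a good-sign operator attains its minimum on the boundary.

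For $(3)$ I would construct the standard Hopf barrier. Because $Q$ is not a corner, the rectangle satisfies the interior sphere condition at $Q$, so there is a ball $B_R(y_0)\subset D$ tangent to $\partial D$ at $Q$; on the annulus $B_R(y_0)\setminus\overline{B_{R/2}(y_0)}$ I would set $v=e^{-\alpha|x-y_0|^2}-e^{-\alpha R^2}$ and choose $\alpha$ large enough that uniform ellipticity forces the term quadratic in $\alpha$ to dominate and $\widetilde L v\ge 0$. Since $\omega(Q)=0$ is the minimum value, $\omega$ is positive on the inner sphere and vanishes at $Q$; comparing $\omega$ with $\varepsilon v$ via the comparison principle from $(1)$ gives $\omega\ge\varepsilon v$ on the annulus with equality at $Q$, and differentiating inward yields $\partial_\nu\omega(Q)\le\varepsilon\,\partial_\nu v(Q)<0$, so $\partial_\nu\omega(Q)\neq 0$. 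Statement $(2)$ then follows from the strong minimum principle for $\widetilde L$: if the minimum value $0$ were attained at an interior point without $\omega$ being constant, the boundary of the minimizing set would have a point admitting an interior tangent ball on which $(3)$ produces a nonzero normal derivative, contradicting $\nabla\omega=0$ at an interior minimum; hence $\omega\equiv 0$.

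The main obstacle, and the crux of the whole lemma, is the absence of the sign condition $c\le 0$: every classical principle I invoke requires a good-sign zeroth-order term, and the reduction to $\widetilde L$ works only because $\omega\ge 0$ (to absorb $c^+\omega$) and because the minimum value is exactly $0$, hence nonpositive, at each application. I would therefore be careful to run every perturbation and barrier argument with $\widetilde L$ rather than $L$, and to use the nonpositivity of the minimum value wherever the good-sign theory is applied. The exclusion of corners in $(3)$ is likewise essential, since the interior sphere condition — and with it the Hopf barrier — fails at the corners of $\overline D$.
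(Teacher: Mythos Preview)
Your argument is correct, and the key reduction---splitting $c=c^+-c^-$ and passing to $\widetilde L=L_0-c^-$ using $\omega\ge 0$---is precisely the device that removes the sign condition on $c$ under the hypothesis $\inf_D\omega=0$. The paper, however, does not give its own proof of this lemma: it is stated as a quotation of Theorems~2.13, 2.15 and Remark~2.16 in Fraenkel's book \cite{2} and is used as a black box in the proof of Theorem~\ref{theorem3.1}. Your sketch is essentially the proof one finds in that reference (Remark~2.16 there is exactly the observation that when the extremal value is zero the $c\le 0$ hypothesis may be dropped), so there is no genuine methodological difference---you have supplied the standard justification that the paper simply cites. One small point worth tightening: in your treatment of $(1)$ the perturbation should be carried out for $\widetilde L$ rather than for $L_0$ alone, i.e.\ choose $\gamma$ large enough that $a_{11}\gamma^2+b_1\gamma-c^->0$, so that $\widetilde L(e^{\gamma x_1})>0$ and hence $\widetilde L\omega_\varepsilon<0$; the zeroth-order term then enters the contradiction at an interior minimum through $-c^-\omega_\varepsilon\ge 0$ (valid because $\min\omega_\varepsilon<0$), which is the precise mechanism you allude to but do not spell out.
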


\begin{lemma} \label{lemma3.2}
(Serrin's Edge-point lemma, see Lemma 2 in \cite{4})
Let
$$
D: = \{(x,z)\in R^{2}: a< x < b, m < z < \xi(x)\},
$$
where $a<b$, $\xi \in C^{2}([a,b])$, $\xi > m$, and $\xi^{'}(a)=0$ $(or~\xi^{'}(b)=0)$. Let further $\omega\in C^{2}(\overline{D})$ satisfies $L\omega\leq0$ in $D$ for some uniformly elliptic operator $L=a_{ij}\partial_{ij}+b_{i}\partial_{i}$ with measurable and $L^{\infty}$ coefficients in $\overline{D}$. If the edge point $Q=(a, \xi(a))$ $(or~Q=(b,\xi(b))$ satisfies $\omega(Q)=0$ and $\omega\geq0$ in $D$, then either
$$
\frac{\partial \omega}{\partial s} > 0~~ or ~~\frac{\partial^{2} \omega}{\partial ^{2}s} > 0 ~~at ~Q,
$$
where $s\in R^{2}$ is any direction at $Q$ that enters $D$ non-tangentially.
\end{lemma}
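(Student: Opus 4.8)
The plan is to prove Theorem~\ref{theorem3.1} by the method of moving planes adapted to the rectangular domain $\overline{D}$, working throughout with the height function $h$ solving~(\ref{eq2.23}). The first step is to set up the reflection. For a plane located at $q=\lambda$ with $-\pi<\lambda\le 0$, I would define the reflected function
\begin{eqnarray}
h^{\lambda}(q,p):=h(2\lambda-q,p), \label{eqrefl}
\end{eqnarray}
on the reflected subdomain, and introduce the difference $w^{\lambda}:=h^{\lambda}-h$. Because $h$ solves the quasilinear equation $A(h)h+[\beta-g(h-d)\rho']h_p^3=0$ in $D$, and the equation is invariant under the reflection $q\mapsto 2\lambda-q$ (the coefficients depend only on derivatives of $h$, not explicitly on $q$), the difference $w^{\lambda}$ satisfies a homogeneous linear equation $Lw^{\lambda}=0$ obtained by subtracting the two equations and writing the differences of the nonlinear terms in the standard Newton-quotient form. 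The key point is that the resulting operator $L=a_{ij}\partial_{ij}+b_i\partial_i+c$ is uniformly elliptic with bounded measurable coefficients, because $h\in C^{2,\alpha}(\overline{D})$ and the ellipticity of $A(h)$ is already established via $\Delta=-h_p^2<-m<0$. Note that $c$ need not have a sign here, which is exactly why Lemma~\ref{lemma3.1} is stated without the hypothesis $c\le 0$.

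The second step is to start the moving plane near the trough and show the process can begin. Using the monotonicity~(\ref{eq2.27}) near $q=-\pi$, I would verify that for $\lambda$ close to $-\pi$ one has $w^{\lambda}\ge 0$ on the reflected region, with equality only where forced by the boundary. Concretely, for such $\lambda$ the reflected points $2\lambda-q$ lie in the monotone strip, so $h(2\lambda-q,p)\ge h(q,p)$ follows directly. This establishes $\inf w^{\lambda}=0$ and allows me to invoke part~(1) of Lemma~\ref{lemma3.1}: $w^{\lambda}$ attains its minimum on the boundary of the reflected subdomain. The third step is to define
\begin{eqnarray}
\lambda_{0}:=\sup\{\lambda\in(-\pi,0]: w^{\mu}\ge 0 \text{ for all } \mu\in(-\pi,\lambda]\}, \label{eqsup}
\end{eqnarray}
and to prove $\lambda_0=0$. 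Suppose for contradiction that $\lambda_0<0$. By continuity $w^{\lambda_0}\ge 0$, and since $w^{\lambda_0}$ is not identically zero (the plane has not reached the crest), the strong maximum principle, part~(2) of Lemma~\ref{lemma3.1}, forces $w^{\lambda_0}>0$ in the interior. I would then derive a contradiction by showing $w^{\lambda_0}$ could be pushed slightly further, i.e.\ $w^{\lambda_0+\epsilon}\ge 0$ for small $\epsilon>0$, which contradicts the definition of the supremum.

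The main obstacle, as in all moving-plane arguments on domains with corners and a free upper boundary, is controlling the behavior of $w^{\lambda}$ at the boundary when pushing past $\lambda_0$, and this is precisely where Lemmas~\ref{lemma3.1}(3) and~\ref{lemma3.2} carry the load. On the bottom $p=p_0$ and the top $p=0$ the values of $w^{\lambda_0}$ are controlled by the Dirichlet and Bernoulli boundary conditions in~(\ref{eq2.23}); the delicate points are the moving vertical edge $q=\lambda_0$ and, critically, the two \emph{corner} points where the reflection plane meets the free surface and the bed. At a non-corner boundary point $Q$ where $w^{\lambda_0}(Q)=0$, Hopf's lemma (Lemma~\ref{lemma3.1}(3)) gives $\partial_\nu w^{\lambda_0}(Q)\neq 0$, which contradicts the fact that along the symmetry plane the normal derivative of the reflected difference must vanish. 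At the corner point, where Hopf's lemma does not apply, I would invoke Serrin's edge-point lemma (Lemma~\ref{lemma3.2}) to conclude that either the first or second directional derivative of $w^{\lambda_0}$ is strictly positive along a direction entering the domain, again contradicting the vanishing that symmetry across the plane would impose. Since both the interior (via strong maximum principle) and the boundary/corner (via Hopf and Serrin) possibilities are excluded unless $w^{\lambda_0}\equiv 0$, I conclude $\lambda_0=0$ and $w^{0}\equiv 0$, i.e.\ $h(-q,p)=h(q,p)$. Translating back through~(\ref{eq2.17}) and~(\ref{eq2.15}), this is exactly the statement that every streamline is symmetric about the crest line $x=0$, completing the proof.
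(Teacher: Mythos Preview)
Your proposal does not address the stated target. Lemma~\ref{lemma3.2} is Serrin's edge-point lemma, quoted in the paper from \cite{4} without proof; what you have written is a sketch of the moving-plane argument for Theorem~\ref{theorem3.1}. These are different statements: Lemma~\ref{lemma3.2} is a general tool about second-order elliptic inequalities near a corner, while Theorem~\ref{theorem3.1} is the symmetry result that \emph{uses} that tool. If the intent was really to prove Lemma~\ref{lemma3.2}, nothing in your text does so.

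Viewed instead as an attempt at Theorem~\ref{theorem3.1}, your outline follows the paper's setup (reflection $w^{\lambda}=h^{\lambda}-h$, linearized operator $L$, definition of $\lambda_0$), but the contradiction step when $\lambda_0<0$ diverges from the paper and contains a genuine gap. You claim that at a non-corner boundary point on the moving edge $q=\lambda_0$, Hopf's lemma contradicts ``the fact that along the symmetry plane the normal derivative of the reflected difference must vanish.'' This is false: on $q=\lambda_0$ one computes $\partial_q w^{\lambda_0}(\lambda_0,p)=-2h_q(\lambda_0,p)$, which has no reason to vanish until symmetry is already established. Hopf's lemma on the moving edge therefore yields no contradiction; it merely confirms $\partial_q w^{\lambda_0}<0$ there, which is consistent with $w^{\lambda_0}>0$ inside. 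The paper handles $\lambda_0<0$ by a different mechanism: it invokes the narrow-region maximum principle (Lemma~\ref{lemma3.3}, from Berestycki--Nirenberg) on the thin strip $D_{\delta_0}=D^{\lambda_0+\delta_0}\setminus D^{\lambda_0-\delta_0}$, after first using Hopf on the \emph{top} boundary to show $\partial_q\omega^{\lambda_0}(\lambda_0,0)<0$ and hence $\omega^{\lambda_0+\delta_0}\ge 0$ on the top of that strip. Your sketch omits Lemma~\ref{lemma3.3} entirely and misplaces where Hopf and Serrin are applied: in the paper, Serrin's edge lemma is used only in Case~1 ($\lambda_0=0$) at the trough corner $(-\pi,0)$, not at the moving edge, and its role is to exclude $\omega(\cdot,\cdot;0)>0$ by exploiting the vanishing of all first and second derivatives of $\omega$ at that specific corner forced by periodicity and the boundary condition.
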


\begin{remark} \label{remark3.2}
Suppose for now that $\omega$ has a determined sign in some region $D'\subset D$ whose boundary includes the corner point $Q$ and which is blunt in the sense that the Serrin edge lemma holds here for $L=a_{ij}\partial_{ij}+b_{i}\partial_{i}+c$ with $L^{\infty}$ coefficients (see Definition E3 of \cite{2}).
\end{remark}

\begin{lemma} \label{lemma3.3}
(see Proposition 1.1 in \cite{26})
Suppose that \\
(1)$D$ is bounded, $\omega\in C^{2}(\overline{D})$;\\
(2)$L\omega=(a_{ij}\partial_{ij}+b_{i}\partial_{i}+c)\omega \geq 0$ $(\leq0)$ where $L$ is uniformly elliptic operator with measurable and $L^{\infty}$ coefficients;\\
(3)$u\mid_{\partial\Omega}\leq 0$ $(\geq0)$;\\
Then $u\leq 0$ $(\geq0)$ on $\overline{D}$, whenever $|D|< \delta$, where the positive number $\delta$ is independent of $u$ and $D$ (but depend on diam $D$ and $\|c\|_{L^{\infty}}$).
\end{lemma}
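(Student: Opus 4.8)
The plan is to reduce to a single case and then apply the Alexandrov--Bakelman--Pucci (ABP) estimate, using the smallness of $|D|$ to absorb the zeroth-order term. Since replacing $\omega$ by $-\omega$ interchanges the two sets of hypotheses, it suffices to treat the case $L\omega \geq 0$ in $D$ together with $\omega \leq 0$ on $\partial D$, and to conclude $\omega \leq 0$ in $\overline{D}$. Arguing by contradiction, I would suppose that $M := \sup_{D}\omega > 0$.

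First I would decompose the zeroth-order coefficient as $c = c^{+} - c^{-}$ with $c^{\pm}\geq 0$, and isolate the part of $L$ carrying no zeroth-order term, namely $L_{0} := a_{ij}\partial_{ij} + b_{i}\partial_{i}$. On the open set $\{\omega > 0\}$ one has $c^{-}\omega \geq 0$, so that
$$L_{0}\omega = L\omega - c^{+}\omega + c^{-}\omega \geq -c^{+}\omega,$$
since $L\omega \geq 0$. Thus $L_{0}\omega \geq f$ in $D$, where $f := -c^{+}\omega^{+}$ and $\omega^{+} := \max(\omega,0)$; note that $f \leq 0$ and $f^{-} = c^{+}\omega^{+} \leq \|c\|_{L^{\infty}}\,\omega^{+}$.

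Next I would invoke the ABP maximum principle for the zeroth-order-free uniformly elliptic operator $L_{0}$ in the plane ($n=2$): because $\omega \leq 0$ on $\partial D$, its boundary positive part vanishes and
$$\sup_{D}\omega \leq C\,\mathrm{diam}(D)\,\bigl\|f^{-}\bigr\|_{L^{2}(D\cap\{\omega>0\})},$$
where $C$ depends only on the ellipticity constants and on $\mathrm{diam}(D)\,\||b|\|_{L^{\infty}}$, and the $L^{2}$ norm is in fact taken over the upper contact set. Bounding $f^{-}$ and using $\|\omega^{+}\|_{L^{2}(\{\omega>0\})} \leq |D|^{1/2}\,M$ gives
$$M \leq C\,\mathrm{diam}(D)\,\|c\|_{L^{\infty}}\,|D|^{1/2}\,M.$$
Dividing by $M>0$ yields $1 \leq C\,\mathrm{diam}(D)\,\|c\|_{L^{\infty}}\,|D|^{1/2}$, which is impossible once $|D| < \delta := \bigl(C\,\mathrm{diam}(D)\,\|c\|_{L^{\infty}}\bigr)^{-2}$. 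This contradiction forces $M \leq 0$, that is $\omega \leq 0$ in $\overline{D}$, and the resulting $\delta$ depends only on $\mathrm{diam}(D)$, $\|c\|_{L^{\infty}}$ and the fixed ellipticity and drift bounds, exactly as asserted.

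The main obstacle is the presence of a zeroth-order term $c$ with no sign restriction, which blocks any direct appeal to the classical weak maximum principle. The resolution — and the one quantitative input that truly does the work — is the ABP estimate, which measures the right-hand side in $L^{2}$ over only the region $\{\omega > 0\}$; it is precisely the small measure of this set (controlled by $|D| < \delta$) that allows the indefinite term $c\omega$ to be absorbed back into the left-hand side. The remaining point requiring care is to confirm that the ABP constant $C$ is uniform across the family of domains occurring in the application, so that $\delta$ depends on $D$ only through $\mathrm{diam}(D)$; this holds because $C$ enters only via $\mathrm{diam}(D)$ and the fixed ellipticity and drift constants, and never through the specific shape of $D$ or the particular function $\omega$.
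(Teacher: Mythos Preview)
The paper does not supply its own proof of this lemma: it is merely quoted from Berestycki--Nirenberg \cite{26} (Proposition~1.1 there) and used as a black box in the moving-plane argument. So there is no ``paper's proof'' to compare against beyond the original reference.

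Your argument is essentially the standard one, and it is correct in substance. Two small points are worth tightening. First, the inequality $L_{0}\omega \geq -c^{+}\omega^{+}$ does \emph{not} hold throughout $D$: on $\{\omega<0\}$ with $c<0$ one has $L_{0}\omega \geq -c\omega$, which can be negative while $-c^{+}\omega^{+}=0$. This is harmless because, as you note, the ABP estimate only requires the differential inequality on the upper contact set, and that set is contained in $\{\omega>0\}$ once $\omega\leq 0$ on $\partial D$ and $M>0$; but the sentence ``Thus $L_{0}\omega \geq f$ in $D$'' should be replaced by ``on the upper contact set'' or, equivalently, one should apply ABP on the subdomain $D\cap\{\omega>0\}$, where $\omega$ vanishes on the boundary. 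Second, your $\delta$ visibly depends also on the ellipticity constants and on $\|b\|_{L^{\infty}}$ through the ABP constant $C$; the paper's statement suppresses this, but it is implicit since the operator $L$ is fixed in the application. With these caveats, your proof matches the approach of \cite{26}.
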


Now we turn to the proof of our symmetry result.
\begin{proof}
Find that $h$, $\widetilde{h}$ are solutions to (\ref{eq2.23}) and define
\begin{eqnarray}
L:=A(h)+B_{1}(h,\widetilde{h})\partial_{q}+B_{2}(h,\widetilde{h})\partial_{p}+C, \label{eq3.1}
\end{eqnarray}
where the operator $A(h)$ is defined by (\ref{eq2.24}) and other operators are given by
\begin{eqnarray}
\left\{\begin{array}{lll}
{B_{1}(h,\widetilde{h})=\widetilde{h}_{pp}(h_{q}+\widetilde{h}_{q})-2h_{p}\widetilde{h}_{qp}}, \\
{B_{2}(h,\widetilde{h})=\widetilde{h}_{qq}(h_{p}+\widetilde{h}_{p})-2\widetilde{h}_{q}\widetilde{h}_{qp}+
[\beta-g\rho'(\widetilde{h}-d)](h_{p}^{2}+h_{p}\widetilde{h}_{p}+\widetilde{h}_{p}^{2})}, \\
{C=-g\rho'h_{p}^{3}}.\end{array}\right. \label{eq3.3}
\end{eqnarray}
Thus, $L$ is uniformly elliptic operator of second order.
Let $\nu=\widetilde{h}-h$, then it's not difficult for us to verify that $\nu$ satisfies following equations due to (\ref{eq2.23})
\begin{eqnarray}
\left\{\begin{array}{lll}
{L\nu=0} & {\text { in } p_{0}<p<0},\\
{(h_{q}+\widetilde{h}_{q})\nu_{q}+(h_{p}+\widetilde{h}_{p})(2\rho g\widetilde{h}-Q)\nu_{p}+2h_{p}^{2}\rho g\nu =0} & {\text { on } p=0}, \\
{\nu=0} & {\text { on } p=p_{0}}. \end{array}\right. \label{eq3.3}
\end{eqnarray}
For a reflection parameter $\lambda \in (-\pi, 0)$, the reflection of $q$ about $\lambda$ is given by
$q^{\lambda}= 2\lambda-q$, let us define
 $$ D^{\lambda}=\{(q,p)\in D: -\pi<q<\lambda,~p_{0}<p<0\}.$$
At $(q,p)\in D^{\lambda}$, the associated reflection function is
$$ \omega(q,p;\lambda)=h(2\lambda-q,p)-h(q,p),$$
which satisfies $L\omega=0$. At the same time, the reflection function $\omega(\cdot,\cdot;\lambda)$ also satisfies the boundary condition
\begin{eqnarray}
\left\{\begin{array}{ll}
{\omega(\lambda,p;\lambda)=0} &~for~~ {p\in[p_{0}, 0]}, \\
{\omega(q,p_{0};\lambda)=0} & ~for~~{q\in[-\pi, \lambda]},\\
{\omega(-\pi,p;\lambda)=0} &~for~~ {p\in[p_{0}, 0]}.
\end{array}\right.\label{eq3.4}
\end{eqnarray}
The first property is immediate from the definition of $\omega(q,p;\lambda)$, the second follows from the boundary condition $h=0$ on $p=p_{0}$ and the third follows from (\ref{eq2.26}). $\omega(q,0;\lambda)\geq 0$ for $ -\pi<q<-\pi+\epsilon$ where $\epsilon > 0$ is as in (\ref{eq2.27}). Hence, for $0<\lambda+\pi<\epsilon$ with $\epsilon>0$ small, $\omega\geq 0$ on $D^{\lambda}$. Let
$$\lambda_{0}=\sup \{\lambda\in(-\pi,0]:\omega(q,p;\lambda)\geq 0~ in~ D^{\lambda}\}.$$
One of the following two cases can occur:\\
{\bf Case $1$}: $\lambda_{0}=0$;\\
{\bf Case $2$}: $\lambda_{0}\in (-\pi,0).$\\
{\bf Step $1$}: As $\lambda_{0}=0$, according to the definition of $\lambda_{0}$ and (\ref{eq3.4}), the reflection function yield the following boundary conditions
\begin{eqnarray}
\left\{\begin{array}{llll}
{\omega(-\pi,p;0)\geq0} &~for~~ {p\in[p_{0}, 0]}, \\
{\omega(0,p;0)= 0} & ~for~~{p\in[p_{0}, 0]}, \\
{\omega(q,0;0)\geq 0} &~for~~ {q\in[-\pi, 0]},  \\
{\omega(q,p_{0};0)= 0} &~for~~ {q\in[-\pi, 0]}
\end{array}\right.\label{eq3.5}
\end{eqnarray}
Consider the rectangle $D^{0}\doteq(-\pi , 0)\times(p_{0},0)$. Since $inf_{D^{0}}\omega=0$ due to the definition of $\lambda_{0}$, the strong maximum principle in Lemma \ref{lemma3.1} implies
\begin{eqnarray}
\omega(q,p;0)>0 ~~ \text{in}~ D^{0} ~~~~\text{or} ~~~~\omega(q,p;0) \equiv 0 ~~ \text {in} ~D^{0}. \label{eq3.6}
\end{eqnarray}
If $\omega$ vanishes throughout $D^{0}$, we have symmetry.

Next, what we need to do is preclude the case \{$\omega(q,p;0)>0$ in $D^{0}$\}. Thus, we assume $\omega(q,p;0)>0$. At the the trough $(-\pi,0)$, we have
$$
\omega(-\pi,0;0)=\omega_{p}(-\pi,0;0)=\omega_{pp}(-\pi,0;0)=\omega_{q}(-\pi,0;0)=\omega_{qq}(-\pi,0;0)=0
$$
due to the evenness of $h$, and the periodicity of $h$ implies
$$
\omega(-\pi,0;0)=\omega_{q}(-\pi,0;0)=\omega_{qq}(-\pi,0;0)=0.
$$
While differentiating the second equation of (\ref{eq2.23}) with respect to $q$, we have
$$2h_{q}h_{qq}+ 2h_{p}h_{qp}(2\rho gh-Q)+ 2g\rho h_{p}^{2}h_{q}=0,$$
which forces $ h_{qp}(-\pi,0)=0$, since $h_{p}>0$ and $2\rho gh-Q<0$. According to the same boundary condition, a similar argument holds for the reflection $h(-q,p)$, whence $\omega_{qp}(-\pi, 0)=0$. Since $\omega$ has a determined sign in region $D^{0}$ whose boundary includes the corner point $(-\pi, 0)$ and which is blunt in the sense that the Serrin edge lemma (Remark \ref{remark3.2}) holds there. This is a contradiction, which means that the wave is symmetric about the crest located at $q=0$.\\
{\bf Step $2$}: As $\lambda_{0}\in (-\pi, 0)$, (\ref{eq3.4}) and the definition of $\lambda_{0}$ imply
\begin{eqnarray}
\left\{\begin{array}{llll}
{\omega(-\pi,p;\lambda_{0})\geq 0} &~for~~ {p\in[p_{0}, 0]}, \\
{\omega(\lambda_{0},p;\lambda_{0})=0} &~for~~ {p\in[p_{0}, 0]},\\
{\omega(q,0; \lambda_{0})\geq0} &~for~~ {q\in[-\pi,\lambda_{0}]},\\
{\omega(q_{0},p_{0}; \lambda_{0})= 0} & ~for~~{q\in[-\pi,\lambda_{0}].}
\end{array}\right.\label{eq3.7}
\end{eqnarray}
If we redefine $D^{\lambda_{0}}=(-\pi, \lambda_{0})\times (p_{0}, 0)$, $\omega(q,p; \lambda_{0})\geq 0$ on the boundary of $D^{\lambda_{0}}$. It follows from the strong maximum principle in Lemma \ref{lemma3.1} (the predetermined sign of $\omega$) that
\begin{eqnarray}
\omega(q,p;\lambda_{0})>0 ~~ \text{in}~ D^{\lambda_{0}} ~~~~\text{or} ~~~~\omega(q,p;\lambda_{0}) \equiv 0~~ \text {in} ~D^{\lambda_{0}}. \label{eq3.8}
\end{eqnarray}

Let us firstly assume that
\begin{eqnarray}
\omega(q,p;\lambda_{0})>0 & in~ D^{\lambda_{0}}, \label{eq3.9}
\end{eqnarray}
which will be precluded according to Narrow region Lemma \ref{lemma3.3}. Let $\delta_{0}= \min\{-\lambda_{0}, \delta\}$ ($\delta $ is determined below), and the function $\omega(q,p;\lambda_{0}+\delta_{0})$, denoting as $\omega^{\lambda_{0}+ \delta_{0}}$, on the narrow region $D_{\delta_{0}}=D^{\lambda_{0}+\delta_{0}}/D^{\lambda_{0}-\delta_{0}}$ will be studied. It's easy to obtain $\omega(q,p; \lambda_{0}+\delta_{0})$ satisfing $L\omega^{\lambda_{0}+\delta_{0}}=0$ and following boundary conditions
\begin{eqnarray}
\left\{\begin{array}{lll}
{\omega(\lambda_{0}+\delta_{0},p;\lambda_{0}+\delta_{0})=0} &~for~~ {p\in[p_{0}, 0]}, \\
{\omega(\lambda_{0}-\delta,p;\lambda_{0}+\delta_{0})\geq0} &~for~~  {p\in[p_{0}, 0]}, \\
{\omega(q,p_{0};\lambda_{0}+\delta_{0})=0} &~for~~  {q\in[-\pi,\lambda_{0}+\delta_{0}]}.
\end{array}\right.\label{eq3.10}
\end{eqnarray}
The first property is immediate from the definition of $\omega$, the second follows from (\ref{eq3.9}) and the definition of $\lambda_{0}$, the last one follows from the boundary condition $h=0$ on $p=p_{0}$. Then we will show $\omega(q,0;\lambda_{0}+\delta_{0})\geq 0$ as $q\in[\lambda_{0}-\delta_{0}, \lambda_{0}+\delta_{0}]$.
Consider the top boundary condition, the function $\omega^{\lambda_{0}}(q, 0)> 0$ for $q\in(-\pi, \lambda_{0})$; Otherwise exist $q_{0}\in(-\pi, \lambda_{0})$ such that $\omega^{\lambda_{0}}(q_{0}, 0)=0$, and $\omega^{\lambda_{0}}_{q}(q_{0}, 0)=0$. The boundary condition $1+h_{q}^{2}+h_{p}^{2}(2\rho gh-Q)=0$ where $\rho$ is constant along the streamline, forces $h_{p}(q_{0}, 0)=h_{p}(2\lambda_{0}-q_{0}, 0)$, which is in contradiction with the Hopf lemma  in Lemma \ref{lemma3.1}. Since $\omega^{\lambda_{0}}(q,0)>0$ for $q\in (-\pi, \lambda_{0})$ and $\omega^{\lambda_{0}}(\lambda_{0},0)=0$, we have
\begin{eqnarray}
\partial_{q}\omega^{\lambda_{0}}(\lambda_{0},0)<0. \label{eq3.11}
\end{eqnarray}
Since $\partial_{q}\omega^{\lambda}(q,0)$ as a continuous function of $q$ and $\lambda$, statement (\ref{eq3.11}) implies that for some enough small $\delta$,
$$\partial_{q}\omega^{\lambda}(q,0)<0$$
provided $|\lambda-\lambda_{0}|<\delta$ and $|q-\lambda_{0}|<\delta$. Thus $\omega^{\lambda}(\lambda,0)=0$ and  $\omega^{\lambda}(q,0)>0$ for any $\lambda, q$ such that $\lambda_{0}-\delta\leq q < \lambda \leq\lambda_{0}+\delta$, we can get $\omega^{\lambda_{0}+\delta_{0}}(q,0)\geq 0$ for $q\in[\lambda_{0}-\delta, \lambda_{0}+\delta]$.
Now we apply the narrow region Lemma \ref{lemma3.3} to derive that $\omega(q,p;\lambda_{0}+\delta_{0})\geq 0$ for $(q,p)\in D_{\delta}$ and $\omega(q,p; \lambda_{0})\geq 0$ for $(q,p)\in D^{\lambda_{0}+\delta_{0}}$.
This is in contradiction with the definition of $\lambda_{0}$.

Thus, we have
\begin{eqnarray}
\omega(q,p;\lambda_{0})\equiv 0~in~~ D^{\lambda_{0}}.
\label{eq3.12}
\end{eqnarray}
Note that as long as $2\lambda_{0}+\pi$ lies to the left of the wave crest, $\omega(q,p ;\lambda_{0})\geq 0$ will hold for $-\pi\leq q \leq \lambda_{0} $ by the monotonicity of the streamline between trough and crest. Therefore,  $2\lambda_{0}+\pi$ lies to the right of wave crest or at least in line with the wave crest, {\bf Case 2} implies that $h(q,p)$ is nonincreasing for $q \in (2\lambda_{0}+\pi, \pi)$, then $h(q,p)\equiv h(\pi, p)$ whenever $2\lambda_{0}+\pi\leq q \leq \pi$, Since $\omega(2\lambda_{0}+\pi,p; \lambda_{0})=0$ yields $h(2\lambda_{0}+\pi,p)=h(-\pi,p)=h(\pi,p)$ and the map $q\mapsto h(q,p)$ is nonincreasing on that interval as we established that $2\lambda_{0}+\pi$ lies to the right of the wave crest. We have $h(q,p)= h(2\lambda_{0}-q,p)$ for all $q\in [-\pi, \lambda_{0}]$ so that $q=\lambda_{0}$ must be the location of the wave crest.
\end{proof}

\section{\bf Recovery of the wave from horizontal velocity on axis of symmetry and wave height}
The recovery theorem mainly depends on to a large extent on the fact that the streamlines are symmetry about crest lines which has been considered in Section 3. At the same time, the regularity of streamlines which will be considered in Lemma \ref{lemma4.2} is also significant for following proof of our result.
\begin{theorem}\label{theorem4.1}
Consider a two-dimensional steady continuous stratified periodic water wave with known analytic stratified density $\rho(-\psi)$ and Benoulli's function $\beta(\psi)$, moreover, assume that the horizontal velocity on the axis of symmetry, i.e. $u(0,y)$, and wave height $\eta(0)$ are given. Then,
we can recover the wave profile, the velocity field and the pressure within the fluid.
\end{theorem}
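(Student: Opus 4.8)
The plan, in the spirit of \cite{23} for Stokes waves, is to carry out the reconstruction in the height-function formulation (\ref{eq2.23}) on the fixed rectangle $\overline{D}$, exploiting the two structural facts established above: after the normalization of Theorem \ref{theorem3.1} one has $h(q,p)=h(-q,p)$, so $h$ is \emph{even} in $q$, and by the real analyticity of the streamlines (Lemma \ref{lemma4.2}) the function $h$ is real analytic on $\overline{D}$. Several scalars are already available: the pseudo mass flux $p_{0}$ is the left endpoint of the interval on which the data $\rho$ (equivalently $\beta$) is prescribed, and $g$, $P_{atm}$ are physical constants; the unknown scalars are the wave speed $c$, the mean depth $d$ and the constant $Q$. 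The first step is an analytic continuation from the crest line. Since $h$ is even and analytic, write $h(q,p)=\sum_{n\ge 0}a_{2n}(p)\,q^{2n}$ near $q=0$, with analytic coefficients $a_{2n}\in C^{\omega}([p_{0},0])$. Substituting into the interior equation of (\ref{eq2.23}) and collecting the coefficient of $q^{2k}$ produces, for each $k\ge 0$, an identity whose top-order term is $a_{0}'(p)^{2}(2k+2)(2k+1)\,a_{2k+2}(p)$; since $h_{p}(0,p)=a_{0}'(p)>0$ (no stagnation, $u<c$), it can be solved algebraically for
\[
a_{2k+2}(p)=\mathcal{F}_{k}\!\bigl(a_{0},\dots,a_{2k};\,\beta(-p),\rho(p),\rho'(p),d\bigr),
\]
where $\mathcal{F}_{k}$ depends on $a_{0},\dots,a_{2k}$ and finitely many of their $p$-derivatives. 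Hence all $a_{2n}$ with $n\ge 1$, and therefore the germ of $h$ along $\{q=0\}$ — and, by unique continuation for real-analytic functions, $h$ on all of $\overline{D}$ — is determined once the single crest profile $a_{0}(p)=h(0,p)$ and the constant $d$ are known.

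It remains to extract $a_{0}$, $c$, $d$ (and then $Q$) from the given data. On the crest line $v\equiv0$, hence $h_{q}(0,p)=0$, and (\ref{eq2.22}) reads $u(0,y)=c-\bigl(\sqrt{\rho(p)}\,a_{0}'(p)\bigr)^{-1}$ with $y=a_{0}(p)-d$. Parametrizing the crest by the height $y$ and writing $p=P(y)$, relation (\ref{eq2.7}) together with $p=-\psi$ turns this into the first-order ODE
\[
P'(y)=\sqrt{\rho\bigl(P(y)\bigr)}\,\bigl(c-u(0,y)\bigr),\qquad P(\eta(0))=0,
\]
whose initial condition uses the prescribed wave height and $\psi(0,\eta(0))=0$. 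For each candidate $c$ this is integrated downward from $y=\eta(0)$; since $P'>0$, $P$ decreases, and the bed is located where $P$ first attains the known value $p_{0}$, say at $y=-d(c)$, which fixes $d$; then $a_{0}$ is the shifted inverse of $P$, and $a_{0}(p_{0})=0$, $a_{0}(0)=\eta(0)+d$ hold automatically. The wave speed $c$ is finally pinned down by the mean-zero normalization (\ref{eq2.5}): using $\eta(x)=h(x,0)-d$ and the expansion of the previous paragraph, this reads $\eta(0)+\sum_{n\ge1}\frac{\pi^{2n}}{2n+1}\,a_{2n}(0)=0$, a single scalar equation in $c$. With $a_{0}$ determined, the surface relation of (\ref{eq2.23}) at $q=0$ gives $Q=2g\rho(0)\,a_{0}(0)+a_{0}'(0)^{-2}$.

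With $h$, $c$, $d$, $Q$ (and $p_{0}$) in hand, the whole wave follows at once: the free surface is $\eta(x)=h(x,0)-d$; since $h_{p}>0$, the map $(q,p)\mapsto(x,y)=(q,\,h(q,p)-d)$ is a diffeomorphism of $\overline{D}$ onto $\overline{\Omega}$, and inverting it in (\ref{eq2.22}) produces the velocity field $(u,v)$ throughout $\Omega$; finally the pressure comes from Bernoulli's law (\ref{eq2.10}),
\[
P=E(\psi)-\tfrac{\rho}{2}\bigl((u-c)^{2}+v^{2}\bigr)-gy\rho,
\]
with $E(\psi)=E|_{\eta}-\int_{0}^{\psi}\beta(s)\,ds$ by (\ref{eq2.11}) and $E|_{\eta}=\tfrac12 Q+P_{atm}-g\rho(0)\,d$ from the definition of $Q$.

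I expect the main obstacle to be the closure of the scalar system in the second step: one must show that the crest-line ODE together with the bed condition and the mean-zero normalization determine $(a_{0},c,d)$ uniquely while keeping $a_{0}'>0$ on all of $[p_{0},0]$, so that the recursion never becomes singular. This should follow from a shooting argument monotone in $c$ — increasing $c$ decreases $a_{0}'=\bigl(\sqrt{\rho}(c-u)\bigr)^{-1}$ pointwise, hence strictly decreases the left-hand side of the scalar equation — combined with the a priori existence of the wave under consideration. A secondary point worth noting is that the series $\sum_{n}a_{2n}(p)\,q^{2n}$ need not converge on the whole interval $(-\pi,\pi)$; it is precisely the analyticity of Lemma \ref{lemma4.2} and the unique-continuation step that make the reconstruction well defined on the entire domain rather than only near the crest line.
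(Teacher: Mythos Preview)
Your plan is correct and shares the paper's core idea: exploit the evenness from Theorem \ref{theorem3.1} and the analyticity in the horizontal variable to write an even power series at the crest line, then use the interior PDE to generate all higher coefficients recursively from the zeroth one, which is read off from the given $u(0,\cdot)$. The execution, however, differs from the paper in two respects worth recording. First, you work in the height formulation (\ref{eq2.23}) on the fixed rectangle, expanding $h(q,p)=\sum_{n\ge0}a_{2n}(p)q^{2n}$ and solving the quasilinear recursion for $a_{2k+2}$; the paper instead passes through Proposition \ref{proposition4.1} to the stream-function formulation (\ref{eq2.14}), expands $\psi(x,y)=\sum_{n\ge0}a_{2n}(y)x^{2n}$, and obtains the simpler \emph{linear} recursion (\ref{eq4.21}) because $\Delta\psi=gy\rho'(-\psi)-\beta(\psi)$ is semilinear. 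Your route stays on a fixed domain and uses Lemma \ref{lemma4.2} directly, at the cost of a messier recursion; the paper's route has a cleaner iteration but must first justify analyticity of $x\mapsto\psi(x,y)$ on a free-boundary domain. Second, you are more careful than the paper about the scalars: you note that $p_{0}$ is already known as the left endpoint of the domain of $\rho$, and you propose to recover $c$ and $d$ by integrating the crest-line ODE $P'(y)=\sqrt{\rho(P)}\,(c-u(0,y))$ down from $\eta(0)$ and then fixing $c$ via the mean-zero normalization (\ref{eq2.5}). The paper, by contrast, writes (\ref{eq4.15}) and (\ref{eq4.17}) with $c$ and $d$ appearing but never explains how they are determined from the stated data; it effectively treats them as part of ``considering'' the wave. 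Your monotone shooting argument in $c$ is a reasonable way to close this, though you should be aware that the scalar equation $\eta(0)+\sum_{n\ge1}\frac{\pi^{2n}}{2n+1}a_{2n}(0)=0$ tacitly assumes the $q$-series converges on all of $(-\pi,\pi)$; if it does not, one must instead phrase the constraint as $\int_{-\pi}^{\pi}h(q,0)\,dq=2\pi d$ after the unique-continuation step.
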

\begin{remark}
The equations (\ref{eq2.4}) with solutions $(u,v,P,\eta)$ are equivalent to the equations (\ref{eq2.14}) with solutions $(\eta,\psi)$. (see \cite{1}) \label{remark4.1}
\end{remark}
Before proving this theorem, we firstly take the analyticity of steady continuous stratified periodic water wave's streamlines into consideration. Some technique of estimates and standard Schauder estimates will be used to prove regularity of each streamline (including free surface).
\begin{lemma}\label{lemma4.1}
(see Lemma 3.2 in \cite{9})
Let $l=1$ or $2$ be given, and let $\|\cdot\|$ stand for some H$\ddot{o}$lder norm $\|\cdot\|_{0,\alpha}$ or $\|\cdot\|_{1,\alpha}$. Suppose that $k_{0}$ is an integer with $k_{0}\geq l+1$, and $\partial^{k}_{q}u_{j}\in C^{0,\alpha}(\overline{D})$ for all $k\leq k_{0}$, $j=1,2,3$. If there exists a constant $H\geq 1$ such that
$$
\forall l+1\leq k\leq k_{0}, ~~\|\partial^{k}_{q}u_{j}\|\leq H^{k-l}(k-l-1)!, ~~j=1,2,3,
$$
then we can find a constant $C_{*}$ depending only on $l$ such that
$$
\forall l+1\leq k\leq k_{0}, ~~\|\partial^{k}_{q}(u_{1}u_{2}u_{3})\|\leq C_{*}(\sum^{3}_{j=1}\|u_{j}\|_{l+1,\alpha}+1)^{6}H^{k-l}(k-l-1)!.
$$
\end{lemma}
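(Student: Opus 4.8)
The plan is to reduce everything to Leibniz's rule together with the Banach-algebra structure of the Hölder spaces $C^{0,\alpha}(\overline{D})$ and $C^{1,\alpha}(\overline{D})$, and then to carry out a purely combinatorial estimate on the resulting multinomial sum. First I would write, by Leibniz's formula,
\[
\partial_q^k(u_1u_2u_3)=\sum_{i+j+m=k}\binom{k}{i,j,m}(\partial_q^iu_1)(\partial_q^ju_2)(\partial_q^mu_3),
\]
and invoke the (standard) submultiplicativity $\|fgh\|\le c_0\|f\|\,\|g\|\,\|h\|$ for $\|\cdot\|=\|\cdot\|_{0,\alpha}$ or $\|\cdot\|_{1,\alpha}$ on the bounded set $\overline{D}$, with $c_0$ a universal constant. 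This gives
\[
\|\partial_q^k(u_1u_2u_3)\|\le c_0\sum_{i+j+m=k}\binom{k}{i,j,m}\,\|\partial_q^iu_1\|\,\|\partial_q^ju_2\|\,\|\partial_q^mu_3\|.
\]

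Next I would split this sum according to how many of $i,j,m$ are \emph{high} (i.e. $\ge l+1$) and how many are \emph{low} ($\le l$). For a low index $i$ one has $\|\partial_q^iu_j\|\le\|u_j\|_{l+1,\alpha}$, since $i\le l$ derivatives plus the at-most-one extra derivative carried by the Hölder norm stay within order $l+1$; for a high index $i$ the hypothesis gives $\|\partial_q^iu_j\|\le H^{i-l}(i-l-1)!$. Because $H\ge1$, in every term the product of the $H$-powers over the high indices is at most $H^{k-l}$, so $H^{k-l}$ factors out cleanly. It then remains to show, in each of the four configurations (three low, two low, one low, none low), that the corresponding partial sum of $\binom{k}{i,j,m}$ times the relevant factorials is bounded by $C_l\,(k-l-1)!$ times a product of at most six factors, each of the form $\|u_j\|_{l+1,\alpha}$ or $1$; since $\sum_j\|u_j\|_{l+1,\alpha}+1\ge1$, every such product is dominated by $(\sum_j\|u_j\|_{l+1,\alpha}+1)^6$, and the lemma follows with $C_{*}=C_{*}(l)$.

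The combinatorial core is the only genuine work. Collapsing the multinomial coefficient into products of binomials, the ``one low index $m$'' case reduces to bounding $\frac{k!}{i!}\le k^{2l}$ (a product of at most $2l$ factors $\le k$) times $(i-l-1)!\le(k-l-1)!$, and $k^{2l}\le C_l(k-l-1)!$ for all $k\ge l+1$ because the right-hand side is eventually factorial. The ``two high, one low'' and ``three high'' cases I would handle by writing each high index as $l+1+a$, using the elementary inequality
\[
\frac{a!}{(a+l+1)!}=\frac1{(a+1)(a+2)\cdots(a+l+1)}\le\frac1{(a+1)^{l+1}},
\]
and the convolution bound $\sum_{a+b=N}(a+1)^{-s}(b+1)^{-s}\le C_s(N+1)^{-s}$ (and its iterate for three indices) valid for $s=l+1\ge2$; together with $k(k-1)\cdots(k-l)\le C_l\,(k-3l)_{+}^{l+1}$ for $k$ large and a trivial bound for the finitely many small $k$, this produces exactly the factor $(k-l-1)!$. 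The ``three low'' case occurs only for the finitely many $k$ with $l+1\le k\le 3l$ and is absorbed by the same device.

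The step I expect to be the main obstacle is precisely this combinatorial estimate: one must exploit \emph{all} $l+1$ large factors in each denominator $(a+l+1)!$ (keeping only two of them, as would suffice when $l=1$, is not enough for $l=2$), so that the multinomial weights do not outgrow $(k-l-1)!$. Everything else — the Leibniz expansion, the algebra property of the Hölder norms, the crude bound $\|\partial_q^iu_j\|\le\|u_j\|_{l+1,\alpha}$ on low-order derivatives, and the bookkeeping that yields the exponent $6$ — is routine.
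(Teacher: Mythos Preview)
The paper does not prove this lemma at all: it is quoted verbatim from \cite{9} (Lemma~3.2 there) and used as a black box in the proof of Lemma~\ref{lemma4.2}. So there is no ``paper's own proof'' to compare against; your proposal is effectively a reconstruction of the argument in \cite{9}.

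That said, your outline is the standard one and matches what one finds in \cite{9}: Leibniz expansion of the triple product, the Banach-algebra property of $C^{0,\alpha}$ and $C^{1,\alpha}$, the low/high splitting of indices at threshold $l+1$, and a combinatorial estimate showing that the multinomial weights times the factorials $(i-l-1)!$ from the hypothesis are controlled by $C_l(k-l-1)!$. One small slip: the case you label ``one low index $m$'' and then bound via $\frac{k!}{i!}\le k^{2l}$ together with a single factor $(i-l-1)!$ is actually the \emph{one-high, two-low} case (only $i$ is high); your wording suggests the opposite. More importantly, the chain ``$\frac{k!}{i!}\le k^{2l}$, $(i-l-1)!\le(k-l-1)!$, and $k^{2l}\le C_l(k-l-1)!$'' does not combine to give $C_l(k-l-1)!$ --- multiplying the first two bounds gives $k^{2l}(k-l-1)!$, not $(k-l-1)!$. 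The clean way is to write
\[
\frac{k!}{i!}\,(i-l-1)!
=\frac{k(k-1)\cdots(k-l)}{\,i(i-1)\cdots(i-l)\,}\,(k-l-1)!,
\]
and observe that since $i\ge k-2l$ the ratio of the two $(l+1)$-fold products is bounded by a constant depending only on $l$ (trivially for the finitely many small $k$, and by $i-s\ge k-3l$ for large $k$). With that correction the one-high case goes through; your treatment of the two-high and three-high cases via the convolution inequality $\sum_{a+b=N}(a+1)^{-s}(b+1)^{-s}\le C_s(N+1)^{-s}$ for $s=l+1\ge 2$ is exactly the right mechanism, and the exponent $6$ on $\sum_j\|u_j\|_{l+1,\alpha}+1$ arises because at most six low-order norm factors can appear across the three functions and the two values of $l$.
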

\begin{lemma}\label{lemma4.2}
Let the functions $\rho\in C^{1,\alpha}([p_{0},0];R^{+})$, $\beta\in C^{0,\alpha}([0,|p_{0}|];R)$ be given with $\rho'(p)\leqslant0, p_{0}<0$; and let $h\in C^{2,\alpha}_{per}(\overline{D})$ with $h_{p}>0$ be the solutions to (\ref{eq2.23}). Then, the mapping $q\mapsto h(q,p)$, with any fixed $p\in [p_{0},0]$, is analytic in $R$.
\end{lemma}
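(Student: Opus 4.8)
The plan is to establish analyticity of $q \mapsto h(q,p)$ by proving quantitative estimates on all $q$-derivatives of $h$, of the factorial form $\|\partial_q^k h\|_{2,\alpha} \le M\,H^{k-2}(k-2)!$ for a suitable $H \ge 1$, uniformly in the strip, and then invoking the standard characterization of real-analytic functions. Since $h \in C^{2,\alpha}_{per}(\overline D)$ with $h_p \ge m > 0$, the first equation of (\ref{eq2.23}) is a quasilinear uniformly elliptic equation for $h$, and the operator $A(h)$ from (\ref{eq2.24}) is uniformly elliptic with $C^{0,\alpha}$ coefficients. First I would note that $q$-translation is a symmetry of the problem: for fixed $p$ the coefficients $\rho(p), \beta(-p)$ do not depend on $q$, so differentiating (\ref{eq2.23}) in $q$ repeatedly keeps us inside a family of linear elliptic equations of the same type. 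Concretely, if $w_k := \partial_q^k h$, then $w_1$ solves a linear equation $A(h)w_1 + (\text{lower order in } w_1) = 0$ obtained by differentiating the PDE, with an analogous first-order oblique boundary condition on $p=0$ coming from differentiating $1 + h_q^2 + h_p^2(2g\rho h - Q) = 0$, and the homogeneous Dirichlet condition $w_1 = 0$ on $p = p_0$; and in general $w_k$ solves $A(h)w_k = F_k$ on $p_0 < p < 0$ with an oblique boundary condition $B(h)w_k = G_k$ on $p = 0$ and $w_k = 0$ on $p = p_0$, where $F_k, G_k$ are polynomial expressions in $\partial_q^j h$, $j \le k$, $\partial_p\partial_q^j h$, $\partial_p^2\partial_q^j h$, $\rho, \rho', \beta$, with the top-order term $w_k$ itself appearing only through $A(h)$ and the first-order parts.

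The core of the argument is an induction on $k$. First I would fix the base case: $w_0 = h \in C^{2,\alpha}$, and $w_1$ gets its $C^{2,\alpha}$ bound from Schauder theory for the oblique derivative problem (the obliqueness is uniform by the computation displayed after (\ref{eq2.25})), using that the right-hand sides are controlled by $\|h\|_{2,\alpha}$. For the inductive step, assuming the factorial bounds $\|\partial_q^j h\|_{2,\alpha} \le M H^{j-2}(j-2)!$ for all $2 \le j \le k-1$ (and comparable bounds for $j=0,1$), I would apply the interior-plus-boundary Schauder estimate
\[
\|w_k\|_{2,\alpha} \le C\bigl(\|F_k\|_{0,\alpha} + \|G_k\|_{1,\alpha} + \|w_k\|_{0}\bigr),
\]
and then estimate $\|F_k\|_{0,\alpha}$ and $\|G_k\|_{1,\alpha}$ using Lemma \ref{lemma4.1} to handle the products of three factors (each product of the form $u_1 u_2 u_3$ with $u_j \in \{\partial_q^a h, \partial_p \partial_q^b h, \ldots\}$) together with the Leibniz rule; the combinatorial sum $\sum \binom{k}{i_1,i_2,i_3}(i_1 - 2)!\,(i_2 - 2)!\,(i_3 - 2)!$ is dominated (after absorbing the finitely many low-order exceptional terms and the $\rho,\rho',\beta$ factors, which are constants in $q$) by a constant times $(k-2)!$ provided $H$ is chosen large enough relative to $C$, $C_*$, $M$, and $\|h\|_{2,\alpha}$. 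The $\|w_k\|_0$ term is absorbed similarly, or bounded via a maximum-principle estimate (Lemma \ref{lemma3.1}) for the homogeneous-boundary pieces. This closes the induction and gives $\|\partial_q^k h(\cdot,p)\|_{C^0} \le \|\partial_q^k h\|_{2,\alpha} \le M H^{k-2}(k-2)!$ for all $k$, hence a uniform positive radius of convergence for the Taylor series of $q \mapsto h(q,p)$ at every point, which is exactly analyticity on $\mathbb{R}$ (periodicity extends it from $[-\pi,\pi]$ to all of $\mathbb{R}$).

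The main obstacle I anticipate is the bookkeeping in the inductive step: one must track precisely which terms in $F_k$ and $G_k$ carry the top-order derivative $w_k = \partial_q^k h$ (these go on the left via the Schauder estimate and must not be re-estimated on the right), versus the genuinely lower-order products to which Lemma \ref{lemma4.1} applies, and then verify that the constant produced at stage $k$ is $\le M H^{k-2}(k-2)!$ rather than growing — this is where the choice of $H$ (large) and the structure of Lemma \ref{lemma4.1} (the exponent $6$ and the $+1$) are used, and it requires care that the finitely many "boundary" contributions from small $j$ and from the inhomogeneous oblique condition do not spoil the factorial rate. A secondary technical point is justifying that $h$ is smooth enough in $q$ to even form $w_k$ for all $k$: this is itself part of the induction, since each Schauder step upgrades $w_k$ from whatever a priori regularity the differentiated equation gives to full $C^{2,\alpha}$, so one should phrase the induction as simultaneously establishing existence of $\partial_q^k h \in C^{2,\alpha}$ and the quantitative bound. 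Once the derivative bounds are in hand, the passage to analyticity is routine.
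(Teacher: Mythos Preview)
Your proposal is correct and follows essentially the same route as the paper: differentiate (\ref{eq2.23}) repeatedly in $q$ to obtain linear oblique problems (\ref{eq4.3}) for $\partial_q^n h$, apply Schauder estimates, use Lemma~\ref{lemma4.1} to control the product terms in the right-hand sides $F_1,F_2,\Phi_1,\Phi_2$ and close an induction yielding factorial bounds $\|\partial_q^n h\|_{C^{2,\alpha}} \le C^n n!$, then deduce analyticity. The only cosmetic difference is that the paper separates the qualitative smoothness (Step~1) from the quantitative factorial estimate (Step~2) and outsources most of the inductive bookkeeping to the estimates in \cite{9}, whereas you outline the combined induction directly; your identification of the main obstacle (tracking top-order vs.\ lower-order terms and choosing $H$ large) is exactly the content of those cited lemmas.
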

\begin{proof}
{\bf Step 1}: We show that $\partial_{q}^{n}h\in C^{2,\alpha}_{per}(\overline{D})$ for any $n\in N$.

Indeed, we take the derivative with respect to $q$ on both sides of (\ref{eq2.23}), then
\begin{eqnarray}
\left\{\begin{array}{lll}{A(h)h_{q}=f_{1}+f_{2}}  & {\text { in } p_{0}<p<0}, \\
{B(h)h_{q}=0} & {\text { on } p=0}, \\
{h_{q}=0} & {\text { on } p=p_{0}.}\end{array}\right. \label{eq4.1}
\end{eqnarray}
where operators $A(h)$ and $B(h)$ are, given by (\ref{eq2.24})(\ref{eq2.25}), uniformly elliptic and uniformly oblique, with $f_{1}$ and $f_{2}$ given by
\begin{eqnarray}
\left\{\begin{array}{ll}
{f_{1}=-(\partial_{q}h_{q}^{2})h_{pp}+2(\partial_{q}(h_{p}h_{q}))h_{qp}-(\partial_{q}h_{p}^{2})h_{qq}}, \\
{f_{2}=-(\beta+gd\rho')(\partial_{q}h_{p}^{3})+g\rho'\partial_{q}(hh_{p}^{3})}.\end{array}\right. \label{eq4.2}
\end{eqnarray}
Since $h\in C^{2,\alpha}_{per}(\overline{D})$, the the coefficients of the operator $A(h)$ and $B(h)$ are in $C^{1,\alpha}_{per}(\overline{D})$. Similarly, $\rho\in C^{1,\alpha}([p_{0},0];R^{+})$ and $\beta\in C^{0,\alpha}([0,|p_{0}|];R)$ will lead that the right-hand side $f_{1}$ and $f_{2}$ are in $C^{0,\alpha}_{per}(\overline{D})$
due to the properties of $H\ddot{o}lder$ space. Therefore, we can use the standard Schauder estimate (see Theorem 6.30 in \cite{10}) to obtain $h_{q}\in C^{2,\alpha}_{per}(\overline{D})$ with
$$
\parallel h_{q}\parallel_{C^{2,\alpha}}\leq C(\parallel h_{q}\parallel_{C^{0}}+\parallel f_{1}\parallel_{C^{0,\alpha}}+\parallel f_{2}\parallel_{C^{0,\alpha}})
$$
and $C=C(\alpha,\parallel h\parallel_{C^{2,\alpha}})$.

Then we repeat this process by taking the derivative with respect to $q$ up to $2$ order, $3$ order until $n$ order gradually on both sides of (\ref{eq2.23}). The Leibniz formula is used to show
\begin{eqnarray}
\left\{\begin{array}{lll}{A(h)[\partial_{q}^{n}h]=F_{1}+F_{2}}  & {\text { in } p_{0}<p<0}, \\
{B(h)[\partial_{q}^{n}h]=\Phi_{1}+\Phi_{2}} & {\text { on } p=0}, \\
{\partial_{q}^{n}h=0} & {\text { on } p=p_{0}.}\end{array}\right. \label{eq4.3}
\end{eqnarray}
where operators $A(h)$ and $B(h)$ are, given by (\ref{eq2.24})(\ref{eq2.25}), uniformly elliptic and uniformly oblique, with $F_{1}$, $F_{2}$, $\Phi_{1}$ and $\Phi_{2}$ given by
\begin{eqnarray}
\left\{\begin{array}{llll}
{F_{1}=\sum_{k=1}^{n}C^{k}_{n}[-(\partial_{q}^{k}h_{q}^{2})(\partial_{q}^{n-k}h_{pp})+
2(\partial_{q}^{k}(h_{p}h_{q}))(\partial_{q}^{n-k}h_{qp})-(\partial_{q}^{k}h_{p}^{2})(\partial_{q}^{n-k}h_{qq})]}, \\
{F_{2}=-(\beta+gd\rho')(\partial_{q}^{n}h_{p}^{3})+g\rho'\sum_{k=0}^{n}C_{n}^{k}(\partial_{q}^{k}h)
(\partial_{q}^{n-k}h_{p}^{3})}, \\
{\Phi_{1}=-\frac{1}{2}\sum_{k=1}^{n-1}C^{k}_{n}(\partial_{q}^{k}h_{q})(\partial_{q}^{n-k}h_{q})-
\frac{1}{2}(2g\rho h-Q)\sum_{k=1}^{n-1}C^{k}_{n}(\partial_{q}^{k}h_{p})(\partial_{q}^{n-k}h_{p})}, \\
{\Phi_{2}=-g\rho\sum_{k=1}^{n-1}C^{k}_{n}(\partial_{q}^{k}h)(\partial_{q}^{n-k}h_{p}^{2})}.\end{array}\right. \label{eq4.4}
\end{eqnarray}
where $C^{k}_{n}=\frac{n!}{k!(n-k)!}$.
Thus we can use the Schauder estimate  based on standard interaction to get $\partial_{q}^{n}h\in C^{2,\alpha}_{per}(\overline{D})$ with
$$
\parallel \partial_{q}^{n}h\parallel_{C^{2,\alpha}}\leq C(\parallel \partial_{q}^{n}h\parallel_{C^{0}}+\sum_{i=1}^{2}\parallel F_{i}\parallel_{C^{0,\alpha}}+\sum_{i=1}^{2}\parallel \Phi_{i}\parallel_{C^{1,\alpha}})
$$
{\bf Step 2}: We show $\parallel \partial_{q}^{n}h\parallel_{C^{2,\alpha}}\leq C^{n}n!$, where $C>0$ is independent on $n$.

Indeed, the estimates (Lemma 3.5-3.8) in \cite{9} show that there is a constant $C(\alpha,\|h\|_{C^{4,\alpha}},\|\beta\|_{C^{0,\alpha}},\|\rho\|_{C^{1,\alpha}})$ such that
\begin{eqnarray}
\parallel \partial_{q}^{n}h\parallel_{C^{2,\alpha}}\leq C^{n}n! \label{eq4.5}
\end{eqnarray}
provided that the new term $g\rho'\sum_{k=0}^{n}C_{n}^{k}(\partial_{q}^{k}h)
(\partial_{q}^{n-k}h_{p}^{3})$ in $F_{2}$ satisfies
\begin{eqnarray}
\parallel g\rho'\sum_{k=0}^{n}C_{n}^{k}(\partial_{q}^{k}h)
(\partial_{q}^{n-k}h_{p}^{3})\parallel_{C^{0,\alpha}}\leq C_{1}^{n}n! \label{eq4.6}
\end{eqnarray}
where $C_{1}$, independent on $n$, is a constant.
Because of $\rho\in C^{1,\alpha}([p_{0},0];R^{+})$, $\beta\in C^{0,\alpha}([0,|p_{0}|];R)$, and
$$\parallel \sum_{k=0}^{n}C_{n}^{k}(\partial_{q}^{k}h)
(\partial_{q}^{n-k}h_{p}^{3})\parallel_{C^{0,\alpha}}\leq\sum_{k=0}^{n}C_{n}^{k}
\parallel\partial_{q}^{k}h\parallel_{C^{0,\alpha}}\parallel\partial_{q}^{n-k}h_{p}^{3}\parallel_{C^{0,\alpha}},$$
then we can apply the skillful Lemma \ref{lemma4.1}, with $l=2,k_{0}=n,H=C_{1},u_{1}=u_{2}=u_{3}=h_{p}$, to get the result.\\
{\bf Step 3}: The mapping $q\mapsto h(q,p)$ is analytic in $R$.

From Step 2, we have
\begin{eqnarray}
max_{(q,p)\in\overline{D}}|\partial_{q}^{n}h(q,p)|\leq\parallel\partial_{q}^{n}h\parallel_{C^{2,\alpha}}\leq C^{n}n! \label{eq4.7}
\end{eqnarray}
That is to say, for any $p\in[p_{0},0]$,
\begin{eqnarray}
max_{q\in R}|\partial_{q}^{n}h(q,p)|\leq C^{n}n!,~~~~for ~~ n\in N \label{eq4.8}
\end{eqnarray}
due to periodicity. Thus we can write the $h(q,p)$ the form of Taylor series at some point $(q_{0},p)\in \overline{D}$, and the remainder term is
\begin{eqnarray}
\frac{\partial_{q}^{n+1}h(q_{0},p)}{(n+1)!}(q-q_{0})^{n+1}. \label{eq4.9}
\end{eqnarray}
According to (\ref{eq4.6}), then
\begin{eqnarray}
|\frac{\partial_{q}^{n+1}h(q_{0},p)}{(n+1)!}(q-q_{0})^{n+1}|\leq
|\frac{q-q_{0}}{\frac{1}{C}}|^{n+1}\rightarrow0, ~n\rightarrow\infty \label{eq4.10}
\end{eqnarray}
when $q_{0}-\frac{1}{C}<q<q_{0}+\frac{1}{C}$. Now we finish the proof.

\end{proof}
\begin{proposition}\label{proposition4.1}
If the mapping $q\mapsto h(q,p)$ is analytic in $R$, then each streamline $y=y(x)$ is a real-analytic curve. Moreover, the mapping $x\mapsto \psi(x,y)$ is analytic in $R$.
\end{proposition}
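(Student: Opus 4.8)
The plan is to dispatch the streamline assertion directly from the hypothesis, and then to obtain the analyticity of $\psi$ from its own elliptic equation once the (now analytic) free surface is available.

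\textbf{The streamlines.} Since $h_p>0$ on $\overline{D}$, the Dubreil--Jacotin map $(q,p)\mapsto(x,y)=(q,\,h(q,p)-d)$ is a $C^{2,\alpha}$-diffeomorphism of $\overline{D}$ onto $\overline{\Omega}$ carrying the line $\{p=p_*\}$ onto the $\psi$-level set $\{\psi=-p_*\}$; hence the streamline $\psi\equiv-p_*$ is precisely the graph $y=y(x)=h(x,p_*)-d$ over $x\in\mathbb{R}$. By Lemma~\ref{lemma4.2} the map $x\mapsto h(x,p_*)$ is real-analytic, so $y=y(x)$ is a real-analytic curve; taking $p_*=0$ gives in particular that the free surface $\eta=h(\cdot,0)-d$ is real-analytic and periodic.

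\textbf{The stream function.} Under the standing hypotheses of Theorem~\ref{theorem4.1} that $\rho$ and $\beta$ are analytic, $\psi$ solves the semilinear elliptic problem (\ref{eq2.14}), namely $\Delta\psi=gy\rho'(-\psi)-\beta(\psi)$ in $\Omega$ with an analytic right-hand side, together with $\psi=0$ on $y=\eta(x)$ and $\psi=-p_0$ on $y=-d$. Interior analytic regularity for semilinear elliptic equations with analytic nonlinearity already yields $\psi\in C^\omega(\Omega)$; and since the bottom $y=-d$ is flat and the free surface $y=\eta(x)$ is now known to be analytic, a local analytic flattening of the boundary together with boundary analytic-regularity estimates upgrades this to $\psi\in C^\omega(\overline{\Omega})$. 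Restricting to a horizontal line $y=\mathrm{const}$ and using the $2\pi$-periodicity in $x$ then gives that $x\mapsto\psi(x,y)$ is analytic on all of $\mathbb{R}$. Alternatively, staying within the height-function picture of Lemma~\ref{lemma4.2}: use the first equation of (\ref{eq2.23}) to solve for the normal derivative,
\begin{equation*}
h_{pp}=\frac{2h_qh_ph_{qp}-h_p^2h_{qq}-\bigl[\beta(-p)-g(h-d)\rho'(p)\bigr]h_p^3}{1+h_q^2},
\end{equation*}
which, differentiated repeatedly in $p$ and estimated by a Cauchy--Kovalevskaya (majorant) argument, promotes the tangential ($q$-direction) analyticity of Lemma~\ref{lemma4.2} to joint analyticity of $h$; the analytic implicit function theorem applied to $h(q,p)-y-d=0$, licensed by $\partial_p h=h_p>0$, then shows $x\mapsto p(x)=-\psi(x,y)$ is analytic.

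\textbf{Main difficulty.} The genuinely nontrivial step is the passage to analyticity in the direction transverse to the streamlines: Lemma~\ref{lemma4.2} only provides analyticity in the tangential variable $q$, and separate analyticity in each variable would not by itself force joint analyticity of $h$ (nor analyticity of the implicitly defined $p(x)$). This is exactly where one must use that $h$, equivalently $\psi$, solves the governing equation with analytic $\rho$ and $\beta$, and, in the boundary version, that the free surface has already been shown analytic so that an analytic flattening is available; the interior estimates alone do not suffice near $p=p_0$ and $p=0$.
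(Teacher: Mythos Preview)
Your treatment of the first assertion coincides with the paper's: both note that the streamline $\psi=-p_*$ is the graph $y=h(x,p_*)-d$, so tangential analyticity of $h$ from Lemma~\ref{lemma4.2} gives analyticity of each streamline immediately.

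For the second assertion the two arguments diverge. The paper is extremely brief: it simply remarks that since the analytic function $y=y(x)$ is defined implicitly by $\psi(x,y)=p$, ``the analyticity of $x\mapsto\psi(x,y)$ can be obtained due to the analyticity of $y(x)$,'' citing Buffoni--Toland for the analytic implicit function theorem. Your ``main difficulty'' paragraph pinpoints what is glossed over there: the analytic implicit function theorem in that direction needs joint analyticity of $h$ in $(q,p)$, which Lemma~\ref{lemma4.2} does not provide. You supply this missing step by invoking interior and boundary analytic regularity for the semilinear equation $\Delta\psi=gy\rho'(-\psi)-\beta(\psi)$, using the analyticity of $\rho$ and $\beta$ assumed in Theorem~\ref{theorem4.1} together with the now-analytic free surface; your alternative majorant/Cauchy--Kovalevskaya argument on the height equation is a legitimate variant of the same idea. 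Both routes are sound. What your approach buys is a genuinely self-contained justification of the transverse step; what the paper's one-line reference buys is brevity, at the cost of leaving the reader to supply exactly the elliptic-regularity or joint-analyticity argument you wrote out. Note that either way the analyticity of $\rho$ and $\beta$ is essential, so Proposition~\ref{proposition4.1} should really be read under the hypotheses of Theorem~\ref{theorem4.1}, as you do.
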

\begin{proof}
The first statement is easy to be proved. Indeed, each streamline $\psi(x,y)=p$, with fixed $p\in[p_{0},0]$, can be described by the graph of some function $y=y(x)$ ($y=\eta(x)$ if $p=0$) due to (\ref{eq2.7}). The analyticity of $x\mapsto y(x)$ follows at once from the analyticity of $q\mapsto h(q,p)$ in Lemma \ref{lemma4.2} because of the partial hodograph change of variables (\ref{eq2.15}).
At the same time, the explicit analytic function $y=y(x)$ is determined by the implicit function $\psi(x,y)=p$ where $p$ is a fixed constant. Therefore, the analyticity of $x\mapsto \psi(x,y)$ can be obtained due to the analyticity of $y(x)$. (see \cite{25})
\end{proof}

Now we turn to the proof of our recovery Theorem \ref{theorem4.1}.
\begin{proof}
Proposition \ref{proposition4.1} implies for each point $(x_{0},y)\in\overline{D}$ that
\begin{eqnarray}
|\psi(x,y)-\sum_{k=0}^{n}\frac{\partial_{q}^{k}\psi(x_{0},y)}{k!}(x-x_{0})^{k}|\leq\varepsilon,~n\rightarrow\infty \label{eq4.11}
\end{eqnarray}
if $|x-x_{0}|<\delta$. Because analyticity is local property, the radius of convergence of above Taylor series is independent of the point $(x_{0},y)$. So we can extend the function $\psi$ to the whole area $\overline{D}$. Without loss of generality, here we just need to determine $\psi(x,y)$ in $\{(x,y)| -\delta\leq x\leq\delta, -d\leq y\leq\eta(x)\}$, where $\delta$ is enough small. According to the symmetry and evenness of $\psi$ about $x$, we know that
\begin{eqnarray}
\partial_{x}^{2n+1}\psi(0,y)=0,~~for~n\in N,~y\in [-d,\eta(x)].
\label{eq4.12}
\end{eqnarray}
Therefore, the Taylor series of $\psi(x,y)$, at $x=0$, is
\begin{eqnarray}
\psi(x,y)=a_{2n}(y)x^{2n},~~for~|x|<\delta,~y\in [-d,\eta(x)].
\label{eq4.13}
\end{eqnarray}
where $a_{2n}(y)=\sum_{n=0}^{+\infty}\frac{\partial_{x}^{2n}\psi(0,y)}{(2n)!}$.

Then we can determine all coefficients $a_{2n}(y)$ according to our assumptions in theorem. Indeed, (\ref{eq2.8}) and (\ref{eq2.4}) imply
\begin{align}
\frac{dp_{0}}{dx}
&=\eta_{x}(\sqrt{\rho(x,\eta(x))}[u(x,\eta(x))-c])+\int_{-d}^{\eta(x)}\partial_{x}(\sqrt {\rho(x,y)}[u(x,y)-c])dy \nonumber\\
&=\sqrt{\rho(x,\eta(x))}v(x,\eta(x))-\int_{-d}^{\eta(x)}\partial_{y}(\sqrt {\rho(x,y)}v)dy=0,
\label{eq4.14}
\end{align}
that is to say, $p_{0}$ has nothing to do with $x$. Therefore, choosing $x=0$,
\begin{eqnarray}
p_{0}= \int_{-d}^{\eta(0)}\sqrt {\rho(0,y)}[u(0,y)-c]dy,
\label{eq4.15}
\end{eqnarray}
is determined by our assumptions.
The definition of pseudo-stream function $\psi(x,y)$ implies
\begin{eqnarray}
\psi(x,y)=-p_{0}+\int_{-d}^{y}\sqrt {\rho(x,s)}[u(x,s)-c]ds
\label{eq4.16}
\end{eqnarray}
Thus,
\begin{eqnarray}
a_{0}(y)=\psi(0,y)=-p_{0}+\int_{-d}^{y}\sqrt {\rho(0,s)}[u(0,s)-c]ds
\label{eq4.17}
\end{eqnarray}
is determined.

Taking the Taylor expansions (\ref{eq4.13}) of $\psi$ into the first equation of
(\ref{eq2.14}), we have
\begin{eqnarray}
\sum_{n=1}^{\infty}(2n)(2n-1)\frac{\partial_{x}^{2n}\psi(0,y)}{(2n)!}x^{2n-2}+
\sum_{n=0}^{\infty}\frac{\partial_{y}^{2}\partial_{x}^{2n}\psi(0,y)}{(2n)!}x^{2n}=gy\rho'-\beta.
\label{eq4.18}
\end{eqnarray}
Moreover, the given functions $\rho(-\psi)$ and $\beta(\psi)$ are analytic, so it's not difficult to write $\rho'$ and $\beta$ in following form
\begin{eqnarray}
\rho'=\sum_{n=0}^{\infty}b_{2n}(y)x^{2n},\label{eq4.19}\\
\beta=\sum_{n=0}^{\infty}c_{2n}(y)x^{2n},\label{eq4.20}
\end{eqnarray}
due to (\ref{eq4.13}) in $\{(x,y)| -\delta\leq x\leq\delta, -d\leq y\leq\eta(x)\}$.
Therefore, (\ref{eq4.18}) (\ref{eq4.19}) and (\ref{eq4.20}) imply
\begin{eqnarray}
\left\{\begin{array}{ll}
{ 2a_{2}(y)+a_{0}''(y)=gyb_{0}(y)+c_{0}(y), } \\
{12a_{4}(y)+a_{2}''(y)=gyb_{2}(y)+c_{2}(y),} \\
{...} \\
{(2n)(2n-1)a_{2n}(y)+a_{2n-2}''(y)=gyb_{2n-2}(y)+c_{2n-2}(y).}\end{array}\right. \label{eq4.21}
\end{eqnarray}
From the first equation in (\ref{eq4.21}), $a_{2}(y)$ will be determined. Then, $a_{4}(y)$ will be confirmed according to the second equation in (\ref{eq4.21}). All the coefficients $a_{2n}(y)$ would be obtained by repeating above process, that is to say, the pseudo-stream function $\psi=\psi(x,y)$ is determined. By the way, this iteration process can be easily achieved by computer simulation, thus our thinking would be useful in numerical calculation.

Finally, to finish our proof, it's necessary for us to determine surface profile $\eta(x)$.
From the second equation of (\ref{eq2.14}), we know that
\begin{eqnarray}
\eta(x)=\frac{Q-|\nabla\psi|^{2}}{2g\rho|_{\eta}}-d,\label{eq4.22}
\end{eqnarray}
where
\begin{eqnarray}
Q=2(E|_{\eta}-P_{atm}+g\rho |_{\eta}d)=2(\frac{(u(0,\eta(0))-c)^{2}}{2}+g\rho|_{\eta}\eta(0)+g\rho |_{\eta}d).\label{eq4.23}
\end{eqnarray}
From (\ref{eq4.22}) and (\ref{eq4.23}), we have
\begin{eqnarray}
\eta(x)=\frac{(u(0,\eta(0))-c)^{2}-|\nabla\psi|^{2}}{2g\rho|_{\eta}}+\eta(0),\label{eq4.24}
\end{eqnarray}
Thus, the free surface $\eta(x)$ is determined by our assumptions and (\ref{eq4.24}). Up to now, we finish our proof.
\end{proof}
\section*{Acknowledgement}
The authors acknowledge the support of the National Natural Science Foundation of China (No.11571057).

\section*{References}

\bibliography{mybibfile}

\end{document}